\documentclass[preprint,11pt]{elsarticle}
\usepackage[T1]{fontenc}
\usepackage{fullpage}
\usepackage{hyperref}
\usepackage{amssymb}
\usepackage{amsthm,bm,mathtools}
\usepackage{lineno}
\usepackage{mathrsfs}
\usepackage{amsmath}
\usepackage{color}
\usepackage{xcolor}
\hypersetup{colorlinks=true,linkcolor=blue!55!black,urlcolor=blue!55!black}
\newtheorem{thm}{Theorem}[section]
\newtheorem{lem}[thm]{Lemma}
\newtheorem{coro}[thm]{Corollary}
\newtheorem{prop}[thm]{Proposition}

\newtheorem{conj}[thm]{Conjecture}

\theoremstyle{definition}
\newtheorem{defi}[thm]{Definition}
\theoremstyle{remark}

\newtheorem{rem}[thm]{Remark}
\numberwithin{equation}{section}
\newcommand{\R}{\mathbb R}
\newcommand{\Z}{\mathbb Z}
\newcommand{\F}{\mathcal F}

\newcommand{\rBC}{\overline{\operatorname{BC}}}
\newcommand{\rk}{\operatorname{rk}}

\newcommand{\diag}{\operatorname{diag}}

\newcommand{\degr}{\operatorname{deg}}

\makeatletter

\newcommand{\Rmnum}[1]{\expandafter\@slowromancap\romannumeral #1@}
\makeatother
\allowdisplaybreaks[4]
\begin{document}
\begin{frontmatter}
\title{A weighted inequality for matroid flats and log-concavity of Whitney numbers}
\author[]{Yuxian Dong}
\ead{yuxiand@hotmail.com}

\author[]{Yanxin Liu\corref{cor1}}
\ead{liu-yanxin@outlook.com}
\cortext[cor1]{Corresponding author}

\address{School of Mathematical Sciences, Dalian University of Technology, Dalian 116024,\\ P. R. China}
\begin{abstract}
Using the Hodge--Riemann relations for matroid Chow rings,
we establish a weighted inequality for flats of a loopless matroid. 
As applications, we prove the points--lines--planes conjecture and Brylawski's conjecture on the ultra-log-concavity of Whitney numbers of the first kind. 
We also characterize the cases of equality in both inequalities.
\end{abstract}
\begin{keyword}
Matroids\sep Whitney numbers \sep Log-concavity \sep Chow rings
\end{keyword}
\end{frontmatter}

\section{Introduction}\label{sec:introduction}
A \emph{matroid} $M$ on a finite ground set $E$ is specified by a \emph{rank function} $\rk_M:2^E\to\Z_{\ge0}$ satisfying the following conditions:
\begin{enumerate}
\item[{\rm (i)}]
$0\le \rk_M(X)\le |X|$ for every $X\subseteq E$;

\item[{\rm (ii)}]
$\rk_M(X)\le \rk_M(Y)$ whenever $X\subseteq Y\subseteq E$;

\item[{\rm (iii)}]
$\rk_M(X\cup Y)+\rk_M(X\cap Y)\le \rk_M(X)+\rk_M(Y)$
for all $X,Y\subseteq E$.
\end{enumerate}
We write $r=\rk_M(E)$ for the rank of $M$.
A subset $F\subseteq E$ is a \emph{flat} of $M$ if $\rk_M(F\cup\{e\})>\rk_M(F)$ for every $e\in E\setminus F$.
The flats of $M$, ordered by inclusion, form a geometric lattice $\mathcal L(M)$, called the \emph{lattice of flats}.
For each integer $0\le i\le r$, we write
\begin{equation*}
\F_i(M)=\{F\in\mathcal L(M):\rk_M(F)=i\}.
\end{equation*}
The \emph{Whitney numbers of the second kind} of $M$ are $W_i(M)=|\F_i(M)|$ for $0\le i\le r$.
We refer to~\cite{Oxl11,Wel76} for standard matroid terminology and facts.

Rota~\cite{Rot70} conjectured that the sequence $(W_i(M))_{i=0}^r$ is unimodal, that is,
\begin{equation*}
W_0(M)\le\cdots\le W_m(M)
\ge\cdots\ge W_r(M)
\quad\text{for some }0\le m\le r.
\end{equation*}
Mason~\cite{Mas72} proposed the stronger conjecture that this sequence is log-concave:
\begin{equation*}
W_i(M)^2\ge W_{i-1}(M)W_{i+1}(M)
\quad \text{for all } 1\le i\le r-1.
\end{equation*}
Recently, Divoux, Larson, Lowen, and Wang~\cite{DLLW26}
gave a counterexample to the unimodality of $(W_i(M))_{i=0}^r$, thereby disproving both conjectures.
However, the case $i=2$ remained open.
The flats of ranks $1$, $2$, and $3$ are called \emph{points}, \emph{lines}, and \emph{planes}, respectively.
Mason~\cite[Conjecture~1E]{Mas72} proposed the following strengthening of log-concavity at $i=2$, known as the \emph{points--lines--planes conjecture}; see also~\cite{Oxl11, Wel76}.

\begin{conj}[Points--lines--planes conjecture]
\label{conj:plp}
Let $M$ be a matroid of rank at least $3$.
Then 
\begin{equation*}
W_2(M)^2\ge\frac32 W_1(M)W_3(M).
\end{equation*}
\end{conj}

Mason~\cite[Conjecture~1D]{Mas72} also proposed a conjecture whose $i=2$ case strengthens Conjecture~\ref{conj:plp}; see also~\cite{Duk04, Sey82}.

\begin{conj}\label{conj:splp}
Let $M$ be a matroid of rank at least $3$.
Then
\begin{equation*}
W_2(M)^2
\ge
\frac{3(W_1(M)-1)}{2(W_1(M)-2)}
W_1(M)W_3(M),
\end{equation*}
with equality if and only if every plane contains exactly three points.
\end{conj}

Stonesifer~\cite{Sto75} proved Conjecture~\ref{conj:splp} for graphic matroids, and Seymour~\cite{Sey82} extended it to matroids in which every line contains at most four points.
For further results and related work, see~\cite{Duk04, Duk08, Kun00, MMIB12, Pur86}.

An element $e\in E$ is a \emph{loop} if $\rk_M(\{e\})=0$.
The matroid $M$ is \emph{loopless} if it has no loops, or equivalently, if $\varnothing$ is a flat.
For a loopless matroid, the least and greatest elements of $\mathcal L(M)$ are $\varnothing$ and $E$, respectively.

Let $\mu$ be the \emph{M\"obius function} of $\mathcal L(M)$.
For flats $F\subseteq G$, it is determined recursively by
\begin{equation*}
\mu(F,G)=
\begin{cases}
1,
& F=G,\\[2mm]
-\displaystyle\sum_{\substack{H\in\mathcal{L}(M)\\
F\subseteq H\subsetneq G}}
\mu(F,H),
& F\subsetneq G.
\end{cases}
\end{equation*}
For a loopless matroid $M$, the~\emph{characteristic polynomial} is
\begin{equation*}
\chi_M(t)
=
\sum_{F\in\mathcal L(M)}
\mu(\varnothing,F)t^{r-\rk_M(F)}
=
\sum_{i=0}^{r}(-1)^i w_i(M)t^{r-i}.
\end{equation*}
The coefficients $w_i(M)$ are called the \emph{unsigned Whitney numbers of the first kind}.
Rota~\cite{Rot70} and Heron~\cite{Her72} conjectured that the sequence $(w_i(M))_{i=0}^r$ is unimodal.
Welsh~\cite{Wel76} later proposed the stronger conjecture that this sequence is log-concave.
Huh~\cite{Huh12} proved the conjecture for matroids representable over a field of characteristic zero, and Huh and Katz~\cite{HK12} extended the result to all representable matroids.
Adiprasito, Huh, and Katz~\cite{AHK18} proved the conjecture in full generality by establishing the hard Lefschetz theorem and the Hodge--Riemann relations for the Chow ring of a matroid.
Brylawski~\cite[p.~232]{Bry82} proposed the following conjecture.

\begin{conj}
\label{conj:brylawski}
Let $M$ be a loopless matroid of rank $r$ on $n$ elements.
Then the sequence $(w_i(M))_{i=0}^r$ is \emph{ultra-log-concave of order $n$}, that is, the sequence $\bigl(w_i(M)/\binom{n}{i}\bigr)_{i=0}^r$ is log-concave.
\end{conj}

Equivalently, Conjecture~\ref{conj:brylawski} asserts that
\begin{equation*}
w_i(M)^2
\ge
\frac{(i+1)(n-i+1)}{i(n-i)}
w_{i-1}(M)w_{i+1}(M)
\quad \text{for all }1\le i<r.
\end{equation*}
Since the factor on the right is greater than $1$, the conjecture strengthens the log-concavity of $(w_i(M))_{i=0}^r$ proved in~\cite{AHK18, ADH23}.
For supersolvable matroids, Stanley's factorization theorem~\cite[Theorem~4.1]{Sta72} and Newton's inequalities give the stronger conclusion that $(w_i(M))_{i=0}^r$ is ultra-log-concave of order $r$.
Brylawski~\cite{Bry82} also verified the conjectured inequalities for truncations of Boolean algebras, partition lattices, and modular geometries.

For a loopless matroid $M$ of rank $r\ge1$, it is not hard to check that $\chi_M(1)=0$.
The~\emph{reduced characteristic polynomial} is therefore defined by
\begin{equation*}
\overline{\chi}_M(t)
=\frac{\chi_M(t)}{t-1}
=\sum_{i=0}^{r-1}(-1)^i\nu_i(M)t^{r-1-i}.
\end{equation*}
The broken-circuit theorem~\cite{Bry77} implies that $\nu_i(M)>0$ for all $0\le i\le r-1$.
Comparing coefficients in $\chi_M(t)=(t-1)\overline{\chi}_M(t)$, we obtain
\begin{equation}\label{eq:w-nu}
w_i(M)=\nu_i(M)+\nu_{i-1}(M)
\quad \text{for all }0\le i\le r,
\end{equation}
where we set $\nu_{-1}(M)=\nu_r(M)=0$.
In particular, $\nu_0(M)=w_0(M)=1$.

For a subset $S\subseteq E$, the \emph{restriction} of $M$ to $S$,
denoted by $M|S$, is the matroid on $S$ whose rank function is $\rk_{M|S}(X)=\rk_M(X)$ for $X\subseteq S$.
For a flat $F$ and $0\le i\le r$ with $i\ne\rk_M(F)$, let $\Gamma_i(F)$ be the set of rank $i$ flats comparable with $F$ under inclusion.
Our main result is the following weighted inequality.

\begin{thm}\label{thm:weighted-incidence}
Let $M$ be a loopless matroid of rank $r$, and let $1\le i\le r-1$.
Assign a real weight $a_F$ to each flat $F\in\F_i(M)$.
Then
\begin{equation*}
\sum_{H\in\F_{i+1}(M)}
\frac{\left(\sum_{F\in\Gamma_i(H)}a_F\right)^2}
{\nu_{i-1}(M|H)}
\le
\frac{\left(\sum_{F\in\F_i(M)}a_F\right)^2}
{\nu_{i-1}(M)}
+
\sum_{F\in\F_i(M)}
\frac{|\Gamma_{i+1}(F)|-1}{\nu_{i-1}(M|F)}
a_F^2.
\end{equation*}
\end{thm}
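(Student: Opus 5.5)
The plan is to deduce the inequality from the Hodge--Riemann relations of Adiprasito--Huh--Katz in degree one, applied to the Chow ring $A(M)$ with a mixed Lefschetz-type class built from the nef classes $\alpha$ and $\beta$. The key numerical input is the standard identity $\deg(\alpha^{r-1-k}\beta^{k})=\nu_k(M)$ for the reduced characteristic polynomial.

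\emph{Step 1 (reverse Cauchy--Schwarz).} Consider the symmetric bilinear form
\begin{equation*}
Q(y,z)=\deg\bigl(y\,z\,\alpha^{r-i-2}\beta^{i-1}\bigr)
\end{equation*}
on $A^1(M)$. I would use the mixed Hodge--Riemann relations, passing to the limit from ample classes to the nef classes $\alpha,\beta$. They say that $Q$ has at most one positive direction and that $Q(\alpha,\alpha)=\nu_{i-1}(M)>0$. Consequently, for every $y\in A^1(M)$,
\begin{equation*}
Q(y,y)\,\nu_{i-1}(M)\le Q(y,\alpha)^2 .
\end{equation*}

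\emph{Step 2 (local degree computations).} I would use the product decomposition of $x_F A(M)$ as $A(M|F)\otimes A(M/F)$. Under it, $\beta$ acts on the $M|F$ factor and $\alpha$ acts on the $M/F$ factor. From this I would extract the following values.
\begin{itemize}
\item For $\rk F=i$: $\deg(x_F\alpha^{r-i-1}\beta^{i-1})=\nu_{i-1}(M|F)\,\nu_0(M/F)=\nu_{i-1}(M|F)$.
\item For $\rk H=i+1$: $\deg(x_H\alpha^{r-i-1}\beta^{i-1})=0$, because $\beta^{i-1}$ is too small a power to survive on $M|H$.
\item Mixed terms: $Q(x_F,x_G)=0$ for distinct $F,G$ of equal rank, since incomparable flats have $x_Fx_G=0$. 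Similarly $Q(x_F,x_H)=0$ unless $F\subset H$. For $F\subset H$ with $\rk F=i$ and $\rk H=i+1$, the product $x_Fx_H$ reduces to a top degree on $M|F$, $M|H/F$ (rank one) and $M/H$, which should give $Q(x_F,x_H)=\nu_{i-1}(M|F)$.
\item Self-intersections: I would compute $Q(x_F,x_F)$ and $Q(x_H,x_H)$ by replacing one factor $x_F$ using the linear relations. Concretely, write $x_F$ as $\beta$ (or $\alpha$) minus a sum over flats comparable with $F$, then evaluate. I expect these to come out as $Q(x_F,x_F)=-(|\Gamma_{i+1}(F)|-1)\,\nu_{i-1}(M|F)$, up to the exact bookkeeping, and $Q(x_H,x_H)=-\nu_{i-1}(M|H)$.
\end{itemize}

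\emph{Step 3 (choice of test class and optimization).} I would take
\begin{equation*}
y=\sum_{F\in\F_i(M)}\frac{a_F}{\nu_{i-1}(M|F)}x_F+\sum_{H\in\F_{i+1}(M)}d_H x_H .
\end{equation*}
With this normalization, $Q(y,\alpha)=\sum_F a_F$. The quadratic $Q(y,y)$ equals $-\sum_F\frac{|\Gamma_{i+1}(F)|-1}{\nu_{i-1}(M|F)}a_F^2$ plus, for each $H$, a concave quadratic in $d_H$ whose linear coefficient is $2\sum_{F\in\Gamma_i(H)}a_F$. Choosing $d_H=\bigl(\sum_{F\in\Gamma_i(H)}a_F\bigr)/\nu_{i-1}(M|H)$ maximizes this quadratic. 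It then contributes exactly $\bigl(\sum_{F\in\Gamma_i(H)}a_F\bigr)^2/\nu_{i-1}(M|H)$. Substituting into Step 1 and dividing by $\nu_{i-1}(M)$ gives the claimed inequality.

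The main obstacle is Step 2, specifically the exact self-intersection numbers $Q(x_F,x_F)$ and $Q(x_H,x_H)$. Their precise values, including the factor $|\Gamma_{i+1}(F)|-1$ and the normalization by $\nu_{i-1}(M|H)$, must match the statement. If my guessed values are off, I would adjust the test class: for instance, add multiples of $\alpha$ or of $x_G$ for other ranks, or switch which of $\alpha,\beta$ is used to rewrite $x_F$. I would then re-optimize over these coefficients. I would first check the whole computation on the Boolean matroid of small rank.
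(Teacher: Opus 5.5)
Your proposal is correct, and its core is the paper's argument. It uses the same intersection form, the same classes $x_F$ and $x_H$, and the same intersection numbers. Your choice $d_H=\bigl(\sum_{F\in\Gamma_i(H)}a_F\bigr)/\nu_{i-1}(M|H)$, followed by reverse Cauchy--Schwarz against $\alpha_M$, produces exactly the vector $u$ of Proposition~\ref{prop:elimination}.

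The genuine difference is how you handle $r>i+2$. You stay in rank $r$ and use $Q(y,z)=\degr_M(yz\,\alpha_M^{r-i-2}\beta_M^{i-1})$. This requires the \emph{mixed} degree-one Hodge--Riemann relations, for products of two different nef classes obtained as limits of ample ones. That statement is available, e.g.\ through the Lorentzian property of the matroid volume polynomial \cite{BL26}. But it is a stronger input than the single-class statement of \cite{AHK18}, so you should cite it explicitly. The paper avoids it by passing to the truncation $T_{i+2}(M)$. Truncation changes none of the quantities in the inequality and makes the exponent $r-i-2$ zero. Only $\degr_M(\beta_M^{i-1}uv)$ is then needed, and Corollary~\ref{cor:onepositive} obtains its signature from single-class HR by approximating $\beta_M$ with $\ell(c_\epsilon)$.

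Your guessed self-intersections are right in every rank. Using $\varphi_F(x_F)=-\beta_{M/F}\otimes1-1\otimes\alpha_{M|F}$, one gets $Q(x_F,x_F)=-\nu_1(M/F)\,\nu_{i-1}(M|F)$ with $\nu_1(M/F)=|\Gamma_{i+1}(F)|-1$. Likewise $Q(x_H,x_H)=-\degr_{M/H}(\alpha_{M/H}^{r-i-2})\,\nu_{i-1}(M|H)=-\nu_{i-1}(M|H)$. Your own route via $x_F=\beta_M-\sum_{G\not\ni e,\,G\ne F}x_G$ with $e\notin F$ also works. The term $\degr_M(x_F\alpha_M^{r-i-2}\beta_M^{i})$ and all terms with $G\subsetneq F$ vanish for degree reasons. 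Exactly one flat of $\Gamma_{i+1}(F)$ contains $e$, so $|\Gamma_{i+1}(F)|-1$ terms survive.

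Finally, your setup does not cover $i=r-1$: there $H=E$ has no class $x_E$, and $\alpha_M^{r-i-2}$ is meaningless. In that case $\F_{i+1}(M)=\{E\}$ and the two sides coincide, so this is only a trivial omission.
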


Theorem~\ref{thm:weighted-incidence} has the following two applications.
First, taking $a_F=1$  for all $F\in\F_2(M)$,
we obtain the following corollary.

\begin{coro}
Conjecture~\ref{conj:splp} holds.
\end{coro}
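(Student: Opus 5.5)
We specialize Theorem~\ref{thm:weighted-incidence} to $i=2$ with $a_F=1$ for every line $F\in\F_2(M)$. We then bound the two sides by elementary counting, and finally treat the equality case separately. All quantities involved depend only on $\mathcal L(M)$, so we may assume $M$ is loopless and simple. Write $n=W_1(M)$. For a line $F$, let $\ell_F$ be its number of points and $\pi_F=|\Gamma_3(F)|$. For a plane $H$, let $p_H$ be its number of points and $L_H=|\Gamma_2(H)|$.

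\textbf{Step 1: specialize the theorem.} Since $\nu_1=w_1-1$ and $w_1$ is the number of points, we have $\nu_1(M)=n-1$, $\nu_1(M|H)=p_H-1$ and $\nu_1(M|F)=\ell_F-1$. The theorem therefore reads
\begin{equation*}
\sum_{H\in\F_3(M)}\frac{L_H^2}{p_H-1}\le\frac{W_2^2}{n-1}+\sum_{F\in\F_2(M)}\frac{\pi_F-1}{\ell_F-1}.
\end{equation*}
Double counting incident pairs (line, plane) gives
\begin{equation*}
\sum_F\frac{\pi_F}{\ell_F-1}=\sum_H s_H,\qquad s_H:=\sum_{F\in\Gamma_2(H)}\frac1{\ell_F-1}.
\end{equation*}
Set $c_H=L_H^2/(p_H-1)-s_H$ and $X=\sum_F 1/(\ell_F-1)$. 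The inequality becomes
\begin{equation*}
\sum_H c_H\le\frac{W_2^2}{n-1}-X.
\end{equation*}

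\textbf{Step 2: lower bound on $X$.} By Cauchy--Schwarz,
\begin{equation*}
W_2^2\le X\sum_F(\ell_F-1).
\end{equation*}
Since $\ell_F\ge2$, we have $\ell_F-1\le\binom{\ell_F}{2}$. Every pair of points lies on exactly one line, so
\begin{equation*}
\sum_F(\ell_F-1)\le\sum_F\binom{\ell_F}2=\binom n2.
\end{equation*}
Hence $X\ge 2W_2^2/(n(n-1))$, with equality iff every line has exactly two points. Substituting gives
\begin{equation*}
\sum_H c_H\le \frac{W_2^2(n-2)}{n(n-1)}.
\end{equation*}
Conjecture~\ref{conj:splp} is exactly the statement $\tfrac32W_3\le W_2^2(n-2)/(n(n-1))$. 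It therefore suffices to prove the per-plane bound $c_H\ge\tfrac32$.

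\textbf{Step 3: the per-plane bound (main obstacle).} Each plane $H$ is a simple rank-$3$ geometry, so de Bruijn--Erd\H{o}s gives $L_H\ge p_H$. Trivially $s_H\le L_H$, hence
\begin{equation*}
c_H\ge \frac{L_H(L_H-p_H+1)}{p_H-1}.
\end{equation*}
We split into three cases.
\begin{itemize}
\item If $L_H\ge p_H+1$, the right side is at least $2(p_H+1)/(p_H-1)>2$.
\item If $L_H=p_H=3$, all three lines have two points, so $c_H=9/2-3=3/2$ exactly.
\item If $L_H=p_H\ge4$, then $H$ is an equality case of de Bruijn--Erd\H{o}s, namely a near-pencil or a projective plane. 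Either way $H$ has a line with at least three points, which contributes at most $\tfrac12$ to $s_H$. Thus $s_H\le L_H-\tfrac12$ and $c_H\ge p_H/(p_H-1)+\tfrac12>\tfrac32$.
\end{itemize}
So $c_H\ge\tfrac32$, with equality iff $p_H=3$. The delicate point is this case analysis, in particular invoking the characterization of the equality case of de Bruijn--Erd\H{o}s. If that characterization is to be avoided, I would instead argue directly: when $L_H=p_H\ge4$, not all lines of $H$ can have two points, since otherwise $L_H=\binom{p_H}2>p_H$. This alone gives the needed line with at least three points.

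\textbf{Step 4: equality.} Suppose equality holds in Conjecture~\ref{conj:splp}. Then every inequality in the chain is tight, in particular $c_H=\tfrac32$ for every plane $H$, so every plane has exactly three points. Conversely, suppose every plane contains exactly three points. Every line lies in some plane because $r\ge3$, and a line with three points together with any further point would span a plane with at least four points. Hence every line has two points and every three points span a plane. This gives $W_2=\binom n2$ and $W_3=\binom n3$, and a direct computation shows that both sides of Conjecture~\ref{conj:splp} equal $n^2(n-1)^2/4$.
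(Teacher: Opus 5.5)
Your proposal is correct and follows the paper's proof essentially step for step: the specialization $i=2$, $a_F=1$, the double count turning the inequality into a sum of per-plane terms $c_H$, the Cauchy--Schwarz bound on $\sum_F 1/(\ell_F-1)$ (the paper's Lemma~\ref{lem:pair}), the per-plane bound $c_H\ge\tfrac32$ via de Bruijn--Erd\H{o}s (Lemma~\ref{lem:rankthree}), and the same equality argument in both directions. The only difference is cosmetic: you split the rank-$3$ case according to whether $L_H>p_H$ or $L_H=p_H$, whereas the paper splits according to whether some line of $H$ has at least three points, and in the case where every line has two points it computes $c_H=\tfrac32\binom{p_H}{3}$ directly.
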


Second, $a_F=|\mu(\varnothing,F)|$ for all $F\in\F_i(M)$,
we obtain the following corollary.

\begin{coro}
Conjecture~\ref{conj:brylawski} holds.
\end{coro}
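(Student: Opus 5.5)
The plan is to deduce the inequality from the Hodge--Riemann relations in degree one, in the form of a reverse Cauchy--Schwarz inequality, inside the Chow ring of an auxiliary matroid in which flats of rank $i$ and $i+1$ of $M$ become the relevant divisors. Concretely, I would pass to the truncation $N$ of $M$ to rank $i+2$. Then the flats of $N$ of rank at most $i+1$ are exactly those of $M$, and every flat of rank $i+1$ in $N$ is a hyperplane. Let $A^\bullet(N)$ be the Chow ring, with generators $x_G$ for the nonempty proper flats $G$, the nef class $\alpha$, and the class $\beta$. I will use the standard intersection numbers: $\deg(\alpha^{d-1-k}\beta^{k})=\nu_k$, with $d=i+1$. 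I will also use their local versions: for a monomial $x_G\cdot(\cdots)$, the degree factors through $A(N|G)\otimes A(N/G)$. This is how the quantities $\nu_{i-1}(M|F)$ and $\nu_{i-1}(M|H)$ will enter, since $N|H=M|H$ for $\rk H\le i+1$.

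The key step is to choose a degree-one class
\[
D=\sum_{F\in\F_i(M)} c_F\,x_F + t\,\alpha,
\]
with $c_F$ proportional to $a_F/\nu_{i-1}(M|F)$ and $t$ a free parameter. I would then apply the Hodge--Riemann relation for $\ell=\alpha$, obtained as a limit of ample classes, in the form
\[
\deg(D\,\ell^{d-1})^2\ \ge\ \deg(\ell^{d})\,\deg(D^2\ell^{d-2}),
\]
or equivalently, negativity of $\deg(D^2\ell^{d-2})$ on the primitive part. Intersection numbers needed:
\begin{itemize}
\item Since distinct rank $i$ flats are incomparable, $x_Fx_{F'}=0$ for $F\ne F'$.
\item The square $x_F^2$ must be rewritten using the linear relations, expressing $x_F$ through $\alpha$ (or $\beta$) minus the sum of the $x_G$ over flats $G$ on one side of $F$. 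This makes $x_F^2\ell^{d-2}$ expand into a term involving $\nu_{i-1}(M|F)$ and a sum over the $|\Gamma_{i+1}(F)|$ hyperplanes $H\supset F$.
\item The mixed terms $x_Fx_H\ell^{d-2}$ localize to $\nu_{i-1}(M|H)$.
\end{itemize}
The $H$-sum will then appear as $\sum_H\bigl(\sum_{F\subset H}a_F\bigr)^2/\nu_{i-1}(M|H)$. To get it, I would either apply Cauchy--Schwarz once more, or include the hyperplane classes $x_H$ in $D$ with coefficients tuned to $\sum_{F\subset H}a_F/\nu_{i-1}(M|H)$ and complete the square.

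After computing, optimizing over $t$ (and over the auxiliary $x_H$-coefficients) should produce exactly
\[
\text{RHS}-\text{LHS}\ \ge 0.
\]
The term $(\sum_F a_F)^2/\nu_{i-1}(M)$ should come from $\deg(D\ell^{d-1})^2/\deg(\ell^d)$, using $\deg(x_F\alpha^{d-1})$-type numbers. The diagonal terms with the coefficient $|\Gamma_{i+1}(F)|-1$ should come from the self-intersections $x_F^2$.

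The main obstacle is the bookkeeping of the self-intersection $x_F^2\ell^{d-2}$. This requires choosing the right presentation of $x_F$, toward $E$ or toward $\varnothing$, so that the resulting degrees are exactly $\nu_{i-1}$ of restrictions and the count $|\Gamma_{i+1}(F)|-1$ appears with the correct sign. It also requires confirming that the normalization $c_F=a_F/\nu_{i-1}(M|F)$ makes everything collapse to the stated form. I would first test the whole computation in the case $i=1$, where $\nu_0=1$ and the inequality is a statement about points and lines, to fix signs and normalizations. I would then carry out the general localization computation.
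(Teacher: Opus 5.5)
\textbf{The main gap: this is not a proof of Brylawski's inequality.} What you outline is a sketch of the weighted inequality, Theorem~\ref{thm:weighted-incidence}, for arbitrary weights $a_F$. Nothing in it produces the numbers $w_i(M)$ or the number $n$ of elements. In fact $n$ cannot come out of intersection numbers at all: $A^*(M)$ and every $\nu_k$ depend only on $\mathcal L(M)$, while $n$ does not. The deduction of the corollary from the theorem is not a substitution; it needs four further steps, none of which appears in your proposal.
\begin{enumerate}
\item[(i)] Specialize to $a_F=|\mu(\varnothing,F)|=\nu_{i-1}(M|F)$. Then $\sum_F a_F=w_i(M)$ and $\sum_{F\in\Gamma_i(H)}a_F=w_i(M|H)$. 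Using $w_i=\nu_i+\nu_{i-1}$ and $\sum_{H\in\F_{i+1}(M)}\nu_i(M|H)=w_{i+1}(M)=\nu_i(M)+\nu_{i+1}(M)$, the theorem collapses to $\nu_i(M)^2/\nu_{i-1}(M)\ge\nu_{i+1}(M)+\sum_H\nu_i(M|H)^2/\nu_{i-1}(M|H)$.
\item[(ii)] Bound the local terms with $\nu_{i-1}(Q)\le i\,\nu_i(Q)$ for loopless $Q$ of rank $i+1$. This holds because every $(i-1)$-face of $\rBC(Q)$ lies in a facet of size $i$. It gives $\nu_i^2/\nu_{i-1}\ge\frac{i+1}{i}\nu_{i+1}+\frac1i\nu_i$.
\item[(iii)] Use $(i+1)\nu_{i+1}(M)\le(n-i-1)\nu_i(M)$, since each $i$-face of $\rBC(M)$ extends by at most $n-i-1$ elements. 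This is the only place $n$ enters. It yields ultra-log-concavity of the $\nu_i$ with factor $\frac{(i+1)(n-i)}{i(n-i-1)}$ for $1\le i\le r-2$.
\item[(iv)] Apply this to $M\oplus U_{1,1}$, which has rank $r+1$, $n+1$ elements, and $\nu_i(M\oplus U_{1,1})=w_i(M)$. This gives the statement for the $w_i$ with the factor $\frac{(i+1)(n-i+1)}{i(n-i)}$ and the full range $1\le i\le r-1$.
\end{enumerate}

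\textbf{The Hodge-theoretic part also fails as set up.} Take $\ell=\alpha$ and let $F$ have rank $i$ in the rank-$(i+2)$ truncation $N$. Then $x_F\alpha^2=\psi_F(\alpha_{N/F}^2\otimes 1)=0$, because $N/F$ has rank $2$. Hence $\degr(x_F\alpha^{i})=0$ for $i\ge2$, and for $i\ge3$ the whole $x_F$-block of $\degr(\alpha^{i-1}uv)$ vanishes, so the form carries no information about the $a_F$.

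Pairing against $\ell=\beta$ is no better. There $\degr(\beta^i x_F)=0$, because $\beta_{N|F}^i=0$ in $A^*(N|F)$. Moreover $\degr(\ell^{d})$ equals $1$ or $\nu_{i+1}(N)$, never $\nu_{i-1}(M)$.

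What works is a mixed setup. Use the form $B(u,v)=\degr(\beta^{i-1}uv)$, which has exactly one positive eigenvalue by Hodge--Riemann for $\ell(c_\epsilon)\to\beta$. Then run the reverse Cauchy--Schwarz (Schur complement) against $\alpha$, not against the Lefschetz class. The needed values, for $F\in\F_i(N)$ and $H\in\F_{i+1}(N)$, are:
\begin{itemize}
\item $B(\alpha,\alpha)=\nu_{i-1}(M)$;
\item $B(\alpha,x_F)=\nu_{i-1}(M|F)$ and $B(\alpha,x_H)=0$;
\item $B(x_F,x_F)=-(|\Gamma_{i+1}(F)|-1)\nu_{i-1}(M|F)$, read off from $\varphi_F(x_F)=-\beta_{N/F}\otimes1-1\otimes\alpha_{N|F}$;
\item $B(x_F,x_H)=\nu_{i-1}(M|F)$ for $F\subseteq H$, not $\nu_{i-1}(M|H)$ as you wrote;
\item $B(x_H,x_H)=-\nu_{i-1}(M|H)$.
\end{itemize}
One then eliminates the mutually orthogonal classes $x_H$.

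Your planned test case $i=1$ cannot reveal this problem. There $\ell^{d-2}=1$, so pairing with $\alpha$ coincides with the correct setup. Also, with $d=i+1$ your degree formula should read $\degr(\alpha^{d-k}\beta^k)=\nu_k$.
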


The rest of the paper is organized as follows.
Section~\ref{sec:prelim} presents preliminary results.
Section~\ref{sec:weighted} proves Theorem~\ref{thm:weighted-incidence}.
Sections~\ref{sec:plp} and~\ref{sec:coefficients} prove Conjectures~\ref{conj:splp} and~\ref{conj:brylawski}, respectively, and characterize the cases of equality.
\section{Preliminaries}\label{sec:prelim}
This section collects the definitions and results needed for the proofs of our main theorems.
Throughout this section, we assume that $M$ is a loopless matroid.
\subsection{Some operations on matroids}
\label{subsec:truncation}
For flats $F\subseteq G$, define the \emph{interval} $[F,G]=\{H\in\mathcal{L}(M):F\subseteq H\subseteq G\}$.
Let $F$ be a flat of $M$.
The lattice of flats of $M|F$ is $[\varnothing,F]$ in $\mathcal{L}(M)$.
The \emph{contraction} $M/F$ is the matroid on $E\setminus F$ with rank function
\begin{equation*}
\rk_{M/F}(S)=\rk_M(S\cup F)-\rk_M(F)
\quad \text{for all }S\subseteq E\setminus F.
\end{equation*}
We have $\mathcal{L}(M/F)\cong[F,E]$.
It is clear that if $\varnothing\subsetneq F\subsetneq E$, then both $M|F$ and $M/F$ are loopless and have positive rank.

For $1\le t\le r$, the \emph{rank $t$ truncation} $T_t(M)$ is the matroid on $E$ with rank function 
\begin{equation*}
\rk_{T_t(M)}(S)=\min\{\rk_M(S),t\} \quad\text{for all }S\subseteq E.
\end{equation*}
The lattice of flats of $T_t(M)$ is
\begin{equation*}
\mathcal{L}(T_t(M))
=
\left(\,\bigcup_{i=0}^{t-1}\F_i(M)\right)\cup\{E\},
\end{equation*}
where $E$ has rank $t$.
The lower intervals $[\varnothing,F]$ with $\rk_M(F)<t$ are unchanged by truncation.
Thus $T_t(M)|F=M|F$ for every such flat $F$.
We also have $w_i(T_t(M))=w_i(M)$ and $\nu_i(T_t(M))=\nu_i(M)$ for $0\le i<t$.

\subsection{The Chow ring and the Hodge--Riemann relations}
\label{subsec:chow}
The Chow ring of a matroid was introduced by Feichtner and Yuzvinsky~\cite{FY04}.
Adiprasito, Huh, and Katz~\cite{AHK18} proved that it satisfies Poincar\'e duality, the hard Lefschetz theorem, and the Hodge--Riemann relations.
Braden, Huh, Matherne, Proudfoot, and Wang~\cite{BHMPW22} subsequently obtained new proofs of these results using a semi-small decomposition of the Chow ring.
We use the presentation in~\cite{BHMPW22},
with coefficients in $\R$.
Let $\F^\circ(M)=\mathcal{L}(M)\setminus\{\varnothing,E\}$.
\begin{defi}
Let $M$ be a loopless matroid.
The \emph{Chow ring} of $M$ is the quotient ring
\begin{equation*}
A^*(M)=
\R[x_F:F\in\F^\circ(M)]\,/\,(I_M+J_M),
\end{equation*}
where the ideals $I_M$ and $J_M$ are defined respectively by
\begin{align*}
I_M &=
\left\langle
x_{F_1}x_{F_2}:F_1,F_2\in\F^\circ(M)
\text{ are incomparable}
\right\rangle,\\
J_M &=
\left\langle
\sum_{F\ni e}x_F-\sum_{F\ni f}x_F
:
e,f\in E
\right\rangle.
\end{align*}
\end{defi}

Each variable $x_F$ has degree $1$.
The Chow ring $A^*(M)=\bigoplus_{i\ge0}A^i(M)$ is a graded algebra, where $A^i(M)$ is its degree $i$ part; see~\cite{AHK18,BHMPW22}.
By convention, $x_F=x_F+(I_M+J_M)$.

For $e\in E$, define $\alpha_M,\beta_M\in A^1(M)$ by
\begin{equation*}
\alpha_M
=
\sum_{\substack{F\in\F^\circ(M)\\e\in F}}x_F,
\qquad
\beta_M
=
\sum_{\substack{F\in\F^\circ(M)\\e\notin F}}x_F.
\end{equation*}
The relations in $J_M$ show that $\alpha_M$ and $\beta_M$ do not depend on $e$.
Recall the following properties.

\begin{prop}[{\cite{AHK18}}]\label{prop:chow-degree}
Let $M$ be a loopless matroid of rank $r\ge1$.
\begin{enumerate}
\item [{\rm (i)}]
$A^0(M)=\R$ and $A^i(M)=0$ for $i>r-1$.

\item [{\rm (ii)}]
For every complete flag $\varnothing\subsetneq F_1\subsetneq\cdots \subsetneq F_{r-1}\subsetneq E$, we have
\begin{equation*}
x_{F_1}\cdots x_{F_{r-1}}=\alpha_M^{r-1}\ne0.
\end{equation*}
These products span $A^{r-1}(M)$, so $A^{r-1}(M)=\{t\alpha_M^{r-1}:t\in\R\}$.
\end{enumerate}
\end{prop}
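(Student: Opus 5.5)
The plan is to work entirely with the presentation $A^*(M)=\R[x_F]/(I_M+J_M)$ and to reduce every monomial to a normal form supported on flags. The four steps are: (a) degree $0$, (b) a straightening relation for squares, (c) equality of all complete-flag monomials, and (d) nonvanishing.

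\emph{Degree $0$.} The generators of $I_M$ are homogeneous of degree $2$, and those of $J_M$ are homogeneous of degree $1$. Hence $I_M+J_M$ contains no nonzero constants, and $A^0(M)=\R$.

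\emph{Straightening squares.} By $I_M$, only monomials whose support is a chain $F_1\subsetneq\cdots\subsetneq F_k$ in $\F^\circ(M)$ can be nonzero. Next I would derive a relation that lowers exponents. Fix a flat $F$ in such a chain, with $F'$ its predecessor in the chain (or $\varnothing$). Choose $e\in F\setminus F'$ and $f\notin F$, and multiply the $J_M$-relation for $e,f$ by $x_F$.

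\begin{itemize}
\item Terms $x_Gx_F$ with $G$ incomparable to $F$ vanish.
\item A flat $G\ni f$ comparable with $F$ must satisfy $G\supsetneq F$.
\item A flat $G\ni e$ comparable with $F$ satisfies $G\supseteq F$ or $G\subsetneq F$ with $e\in G$.
\end{itemize}

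After cancellation of the flats $G\supsetneq F$ containing both $e$ and $f$, this gives
\[
x_F^2=-\sum_{\substack{G\subsetneq F\\ e\in G}}x_Gx_F-\sum_{\substack{G\supsetneq F\\ e\in G,\ f\notin G}}x_Gx_F+\sum_{\substack{G\supsetneq F\\ f\in G,\ e\notin G}}x_Gx_F .
\]
Iterating this relation, and varying $e,f$ as in the Feichtner--Yuzvinsky Gröbner basis, shows that $A^*(M)$ is spanned by monomials $x_{F_1}^{a_1}\cdots x_{F_k}^{a_k}$ over chains with $1\le a_j<\rk F_j-\rk F_{j-1}$, where $F_0=\varnothing$. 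Summing these bounds gives total degree at most $\rk F_k-1\le r-2$ plus possibly one more from a jump at the top. A careful count shows the degree is at most $r-1$. Equality forces a complete flag with all exponents equal to $1$. This proves $A^i(M)=0$ for $i>r-1$, and that complete-flag monomials span $A^{r-1}(M)$.

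\emph{All flag monomials are equal.} Two complete flags that differ in a single position $j$ have $F_j,F_j'$ both in the rank-$2$ interval $[F_{j-1},F_{j+1}]$. Take $e\in F_j\setminus F_{j-1}$ and $f\in F_j'\setminus F_{j-1}$. Multiply the $J_M$-relation for $e,f$ by the product of the other $r-2$ flag variables. Every surviving $G$ must complete that partial flag, so $G\in\{F_j,F_j'\}$ (other rank-$j$ flats in the interval contain neither $e$ nor $f$, or contain both and cancel). Hence the two flag monomials coincide.

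Complete flags of a geometric lattice are connected under such single-position exchanges. So all complete-flag monomials are equal to a single class $\omega$.

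To see $\alpha_M^{r-1}=\omega$, I would show by induction that $\alpha_M\cdot x_{F_{j+1}}\cdots x_{F_{r-1}}=x_{F_j}x_{F_{j+1}}\cdots x_{F_{r-1}}$ summed appropriately. Choosing $e\in F_1$, the expansion $\alpha_M=\sum_{G\ni e}x_G$ multiplied by a partial flag leaves only flats extending the flag. Then the exchange step shows each contributes $\omega$, with the coefficient collapsing to $1$ after using a second representation with $e$ chosen in the minimal new stratum.

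\emph{Nonvanishing.} This is the main obstacle, since the relations alone cannot rule out $\omega=0$. I would construct a linear functional $\deg\colon\R[x_F]_{r-1}\to\R$ that sends each complete-flag monomial to $1$ and kills $I_M+J_M$ in degree $r-1$. Equivalently, I would appeal to the Feichtner--Yuzvinsky Gröbner basis, under which the standard monomials in degree $r-1$ form a nonempty set. Either route shows that $A^{r-1}(M)$ is one-dimensional, spanned by $\omega=\alpha_M^{r-1}\neq0$.
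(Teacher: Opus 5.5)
The paper gives no proof of this proposition; it is quoted from \cite{AHK18}. Judged on its own, your proposal contains one false intermediate claim, which is easily repaired, and one genuine gap. Your degree-$0$ step and your one-position exchange argument are fine.

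\textbf{The normal form.} The spanning set you state is false. A chain of proper flats with $1\le a_j<\rk F_j-\rk F_{j-1}$ gives a monomial of degree at most $\rk F_k-k\le r-2$. If these monomials spanned $A^*(M)$, then $A^{r-1}(M)$ would be $0$; for $r=2$ your set is just $\{1\}$, while $A^1(M)=\R\alpha_M\neq0$. Moreover, a complete flag has all rank jumps equal to $1$, so it never satisfies these bounds. Your conclusion that ``equality forces a complete flag with all exponents equal to $1$'' therefore contradicts your own constraints.

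Those bounds belong to the Feichtner--Yuzvinsky basis. In this presentation that basis needs an extra top factor $\alpha_M^m$ with $0\le m\le r-1-\rk F_k$, coming from $x_E=-\alpha_M$. Its only element of degree $r-1$ is $\alpha_M^{r-1}$, and flag monomials never occur in it.

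What your displayed relation actually proves is different, and it suffices. Choose $e\in F_j\setminus F_{j-1}$ and $f\in F_{j+1}\setminus F_j$, with $F_0=\varnothing$ and $F_{k+1}=E$. Then each application trades $x_{F_j}^2$ for monomials whose supporting chain is longer by one flat, and the right side is empty when no flat fits. Hence squarefree chain monomials span $A^*(M)$. This gives (i), and it shows that complete-flag monomials span $A^{r-1}(M)$. With your exchange argument, they all equal a single class $\omega$.

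\textbf{The genuine gap: $\alpha_M^{r-1}=\omega$.} The identity you propose, $\alpha_M x_{F_{j+1}}\cdots x_{F_{r-1}}=x_{F_j}\cdots x_{F_{r-1}}$, is false. Take a hyperplane $H$ and $e\notin H$. Every flat $G\ni e$ comparable with $H$ would have to equal $E$, so $\alpha_Mx_H=0$ (compare $B_i(\alpha_M,x_H)=0$ in Proposition~\ref{prop:intersection}), and your left side vanishes. With $e\in F_1$ you also meet square terms $x_{F_m}^2$ that you never control, and ``the coefficient collapsing to $1$'' is not an argument.

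The factor $\alpha_M$ has to be absorbed above a lower partial flag, as follows.
\begin{enumerate}
\item[(a)] $x_G\alpha_M^{r-\rk G}=0$ for every $G\in\F^\circ(M)$. Argue by downward induction on $\rk G$. For $e\notin G$ one has $x_G\alpha_M=\sum_{H\supsetneq G,\,e\in H}x_Gx_H$, which is an empty sum when $G$ is a hyperplane. Each $x_H\alpha_M^{r-\rk G-1}$ vanishes by induction, since $\rk H>\rk G$.
\item[(b)] Fix a complete flag with $\rk F_m=m$, and take $e\in F_{k+1}\setminus F_k$. Then $\alpha_Mx_{F_1}\cdots x_{F_k}=\sum_{G\supsetneq F_k,\,e\in G}x_Gx_{F_1}\cdots x_{F_k}$. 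The only such $G$ of rank $k+1$ is $F_{k+1}$, and all other terms are killed by $\alpha_M^{r-2-k}$ by (a).
\end{enumerate}
Induction on $k$ then gives $\alpha_M^{r-1}=x_{F_1}\cdots x_{F_{r-1}}$ for every complete flag. This also makes the exchange step unnecessary.

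\textbf{Nonvanishing.} Your relations do not prove $\omega\neq0$ at all. Pointing to \cite{FY04} is acceptable, since the paper itself only cites. Note, however, that this basis theorem already gives (i) and $\dim A^{r-1}(M)=1$, so the flag identity above is the part you must actually supply.
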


Define the \emph{degree map} $\degr_M:A^{r-1}(M)\to\R$ to be the unique linear map satisfying $\degr_M(\alpha_M^{r-1})=1$.
By Proposition~\ref{prop:chow-degree}, this map is well defined and is an isomorphism.
The coefficients of the reduced characteristic polynomial satisfy the following degree formula.

\begin{prop}[{\cite{AHK18}}]
\label{prop:degree-coeff}
Let $M$ be a loopless matroid of rank $r\ge1$.
For $0\le i\le r-1$, we have
\begin{equation}\label{eq:degree-coeff}
\degr_M(\alpha_M^{r-1-i}\beta_M^i)=\nu_i(M).
\end{equation}
\end{prop}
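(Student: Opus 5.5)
The plan is to prove \eqref{eq:degree-coeff} by induction on $i$, peeling off one factor of $\beta_M$ at a time and localizing at the divisors $x_F$. The case $i=0$ is the normalization $\degr_M(\alpha_M^{r-1})=1=\nu_0(M)$. For $i\ge1$, fix $e\in E$ and write $\beta_M=\sum_{F\in\F^\circ(M),\,e\notin F}x_F$. This gives
\begin{equation*}
\degr_M(\alpha_M^{r-1-i}\beta_M^i)=\sum_{\substack{F\in\F^\circ(M)\\ e\notin F}}\degr_M\bigl(x_F\,\alpha_M^{r-1-i}\beta_M^{i-1}\bigr).
\end{equation*}

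Each summand is evaluated through the standard product description of the ideal generated by $x_F$. Multiplication by $x_F$ gives a surjection $A^*(M|F)\otimes A^*(M/F)\to x_FA^*(M)$, with $x_G\otimes 1\mapsto x_Gx_F$ for $G\subsetneq F$ and $1\otimes x_{G}\mapsto x_Gx_F$ for $G\supsetneq F$. Under this map, $\degr_M(x_F\,u\,v)=\degr_{M|F}(u)\degr_{M/F}(v)$. This follows from Proposition~\ref{prop:chow-degree}(ii), since a complete flag of $M$ through $F$ splits into complete flags of $M|F$ and $M/F$.

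Next I identify $\alpha_M$ and $\beta_M$ after multiplying by $x_F$, choosing the representing element conveniently, which is allowed by the relations in $J_M$.
\begin{itemize}
\item[(a)] Using the element $e\notin F$, only flats $G\supsetneq F$ with $e\in G$ survive the incomparability relations. Hence $x_F\alpha_M$ is the image of $1\otimes\alpha_{M/F}$.
\item[(b)] Using an element $f\in F$, every $G\supseteq F$ contains $f$. Hence $x_F\beta_M$ is the image of $\beta_{M|F}\otimes1$.
\end{itemize}
Consequently, for $\rk_M(F)=k$, the summand equals $\degr_{M|F}(\beta_{M|F}^{i-1})\cdot\degr_{M/F}(\alpha_{M/F}^{r-1-i})$. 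By degree reasons in Proposition~\ref{prop:chow-degree}(i), this vanishes unless $i-1=k-1$ and $r-1-i=r-k-1$, that is, unless $k=i$. When $k=i$, the second factor is $1$ and the first is $\nu_{i-1}(M|F)$ by the induction hypothesis; the induction is really on rank, applied to the rank-$i$ matroid $M|F$ with $i<r$. Moreover, for any rank-$k$ loopless $N$, \eqref{eq:w-nu} gives $\nu_{k-1}(N)=w_k(N)=|\mu_N(\varnothing,E_N)|$. Putting these together reduces the statement to the purely combinatorial identity
\begin{equation*}
\nu_i(M)=\sum_{\substack{F\in\F_i(M)\\ e\notin F}}|\mu(\varnothing,F)|.
\end{equation*}

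The main obstacle is this identity, together with checking that the localization map is compatible with the degree maps. For the identity, I would use Weisner's theorem with the atom $a=\operatorname{cl}(\{e\})$. For every flat $G\ne a$, it gives $\sum_{F:\,F\vee a=G}\mu(\varnothing,F)=0$. The flats with $F\vee a=G$ are $G$ itself if $e\in G$, or, when $e\in G$, those $F$ with $e\notin F$ covered by $G$. Summing with the weight $t^{r-\rk G}$ should give $\chi_M(t)=(t-1)\sum_{e\notin F}\mu(\varnothing,F)t^{r-1-\rk F}$. Comparing with the definition of $\overline\chi_M$ and using the alternation of signs of $\mu$ then yields the identity, which completes the induction.
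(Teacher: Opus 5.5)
Your proof is correct. The paper does not prove this proposition; it quotes it from \cite{AHK18}. So your argument is a genuinely independent, self-contained route: a localization proof by strong induction on the rank.

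Your items (a) and (b) are exactly the pullback formulas \eqref{eq:pullback-ab}, restricted to $x_FA^*(M)$. Your degree compatibility is \eqref{eq:push-degree}, since your multiplication map is $\psi_F$ with the tensor factors swapped. For $r=i+2$, each of your summands $\degr_M(x_F\alpha_M^{r-1-i}\beta_M^{i-1})$ is literally $B_i(\alpha_M,x_F)$, evaluated exactly as in Proposition~\ref{prop:intersection}. What you establish is therefore
\begin{equation*}
\degr_M(\alpha_M^{r-1-i}\beta_M^i)=\sum_{\substack{F\in\F_i(M)\\ e\notin F}}\degr_{M|F}\bigl(\beta_{M|F}^{i-1}\bigr).
\end{equation*}
This reduces the formula to its top case $\degr_N(\beta_N^{\rk N-1})=\nu_{\rk N-1}(N)$ in lower rank. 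The remaining input is the Weisner expansion $\overline{\chi}_M(t)=\sum_{F\in\mathcal L(M),\,e\notin F}\mu(\varnothing,F)\,t^{r-1-\rk_M(F)}$. Compared with the citation, this route needs as Chow-ring input only Proposition~\ref{prop:chow-degree} and elementary localization.

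A few points should be tidied.
\begin{itemize}
\item You should state that $u\otimes v\mapsto x_F\tilde u\tilde v$ is well defined. The generators of $J_{M|F}$ and $J_{M/F}$ are killed by $x_F$, by the same incomparability computation as in (a) and (b). Alternatively, invoke Corollary~\ref{cor:local-degree} together with \eqref{eq:pullback-ab}; neither depends on the present proposition, so there is no circularity.
\item The Weisner relation $\sum_{F\vee a=G}\mu(\varnothing,F)=0$ holds for every flat $G\ni e$, including $G=a$, where it reads $\mu(\varnothing,\varnothing)+\mu(\varnothing,a)=0$. It is vacuous for $G\not\ni e$, so the restriction ``$G\ne a$'' should be dropped. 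When you sum against $t^{r-\rk_M(G)}$, the fact doing the work is that each flat $F\not\ni e$ is covered by exactly one flat containing $e$, namely $F\vee a$. Say this explicitly rather than ``should give''.
\item The absolute values and the appeal to sign alternation are unnecessary. By \eqref{eq:w-nu} and the definition of $w_i$, you get $\nu_{i-1}(M|F)=(-1)^i\mu(\varnothing,F)$ directly. The final step is then just a comparison of the coefficients of $t^{r-1-i}$.
\end{itemize}
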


Extend $\degr_M$ to $A^*(M)$ by setting it equal to zero on $A^i(M)$ for $i\ne r-1$.
A function $c:2^E\to\R$ with $c(\varnothing)=c(E)=0$ is \emph{submodular} if
\begin{equation*}
c(S)+c(T)\ge c(S\cap T)+c(S\cup T)
\quad\text{for all }S,T\subseteq E.
\end{equation*}
It is \emph{strictly submodular} if the inequality is strict whenever $S$ and $T$ are incomparable.
For such a function $c$, define
\begin{equation}\label{l(c)}
\ell(c)=\sum_{F\in\F^\circ(M)}c(F)x_F\in A^1(M).
\end{equation}
The following are the degree zero and degree one cases of the Hodge--Riemann relations.
\begin{thm}[{\cite{AHK18}}]
\label{thm:chow-hr}
Let $M$ be a loopless matroid of rank $r\ge3$, and let $c$ be strictly submodular.
Then
\begin{equation*}
\degr_M(\ell(c)^{r-1})>0.
\end{equation*}
For every nonzero $u\in A^1(M)$ satisfying $\ell(c)^{r-2}u=0$, we have
\begin{equation*}
\degr_M(\ell(c)^{r-3}u^2)<0.
\end{equation*}
\end{thm}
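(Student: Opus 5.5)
The plan is to follow the inductive strategy of Adiprasito--Huh--Katz, as streamlined in~\cite{BHMPW22}. We induct on the rank $r$ and prove the full Kähler package in degrees $0$ and $1$, namely hard Lefschetz together with the Hodge--Riemann relations. This is the statement of Theorem~\ref{thm:chow-hr} strengthened by the assertion that $\ell(c)^{r-3}\colon A^1(M)\to A^{r-2}(M)$ is an isomorphism. For a strictly submodular $c$, consider the symmetric bilinear form
\begin{equation*}
Q_c(u,v)=\degr_M(\ell(c)^{r-3}uv)\quad\text{on }A^1(M).
\end{equation*}
The two claims of the theorem together are equivalent to the following: $\degr_M(\ell(c)^{r-1})>0$, $Q_c$ is nondegenerate, and $Q_c$ has signature $(1,\dim A^1(M)-1)$. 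The positive direction is spanned by $\ell(c)$ itself, and the negative-definite complement is its $Q_c$-orthogonal, which is exactly $\{u:\ell(c)^{r-2}u=0\}$.

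\textbf{Base case and local reduction.} The base case $r=3$ is a direct computation. Here $Q_c$ is the intersection form on $A^1$ of a rank $3$ matroid, and the statement reduces to a Hodge index computation: the matrix of products $x_Fx_G$ is explicit via Proposition~\ref{prop:chow-degree}, combined with the linear relations from $J_M$.

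For the inductive step, we use that multiplication by $x_F$ for $F\in\F^\circ(M)$ gives a surjection
\begin{equation*}
A^*(M)\to A^*(M|F)\otimes A^*(M/F).
\end{equation*}
Under this map, $\ell(c)$ restricts to a sum of classes induced by strictly submodular functions on $M|F$ and on $M/F$, after subtracting a linear function, which does not change the class. Both $M|F$ and $M/F$ have smaller rank, so by induction each factor satisfies the Kähler package. By the standard tensor-product lemma, the product then satisfies Hodge--Riemann in the relevant degrees.

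\textbf{Positivity of the top power.} Write
\begin{equation*}
\degr_M(\ell(c)^{r-1})=\sum_F c(F)\,\degr_M\bigl(x_F\,\ell(c)^{r-2}\bigr).
\end{equation*}
Each summand is computed in the local ring at $F$, where it is positive by induction. The coefficients $c(F)$ need not be positive, however. To handle this, we first modify $c$ by a modular function so that $c(F)>0$ for all $F\in\F^\circ(M)$; this is allowed because $\ell(c)$ changes only by a multiple of the relations in $J_M$. Strict submodularity then makes the resulting expansion a positive combination.

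\textbf{Signature of $Q_c$.} This is the main obstacle, and we propose to handle it in two steps.

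First, we prove the Hodge--Riemann relations for $Q_c$ in degree one via the ``local-to-global'' lemma of AHK. Suppose $\ell=\sum_F c(F)x_F$ with all $c(F)>0$, that every local form $Q_c|_{x_F}$ satisfies Hodge--Riemann by induction, and that $\ell^{r-3}$ is injective on $A^1$. Then $Q_c$ has exactly one positive eigenvalue. The proof of this lemma writes $Q_c(u,u)=\sum_F c(F)\,\degr(x_F\ell^{r-4}u^2)$, so that each local negativity contributes.

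Second, we must establish hard Lefschetz $\ell(c)^{r-3}\colon A^1\xrightarrow{\sim}A^{r-2}$ for all strictly submodular $c$ at once. For this we use the fact that the cone of strictly submodular functions is convex, hence connected. Along a path in this cone, the signature of $Q_c$ cannot jump unless $Q_c$ degenerates. It therefore suffices to exhibit one class for which nondegeneracy holds and to rule out degeneration along the path. AHK handle the second point by moving through a sequence of matroidal flips, which interpolate from a simplicial (Boolean-like) fan where hard Lefschetz is classical to the Bergman fan of $M$. At each flip, the Chow ring decomposes as the old ring plus a piece isomorphic to a shifted tensor product of lower-rank Chow rings, which is controlled by induction. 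Alternatively, one can use the semi-small decomposition of~\cite{BHMPW22}. Checking that Hodge--Riemann is preserved across each flip or decomposition step is where the real work lies, and I expect this step to be the hardest.
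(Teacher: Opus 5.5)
The paper does not prove this statement; it quotes it from \cite{AHK18}. Your localization at $x_F$, the tensor-product step and the positivity of $\degr_M(\ell(c)^{r-1})$ are sound. The base case $r=3$ also works: the Schur complement of the line block in the Gram matrix of the $x_F$ is the all-ones matrix on points.

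The gap is in the signature of $Q_c$. The hypotheses of your local-to-global lemma do not imply its conclusion, and the mechanism you indicate cannot give it. Write $Q_c=\sum_F c(F)Q_F$ with $Q_F(u,v)=\degr_M(x_F\ell^{r-4}uv)$. Local HR gives $Q_F(u,u)\le 0$ only for those $u$ with $Q_F(u,\ell)=0$, and $Q_F(u,\ell)=Q_c(x_F,u)$. So summing controls $Q_c$ only on $\bigcap_F\{u:Q_c(x_F,u)=0\}$, which by Poincar\'e duality is $\ker(\ell^{r-3})$. That is the proof of hard Lefschetz, not of negativity on all of $\ell^{\perp}$. (If $\ell^{r-3}u=0$, then $0=Q_c(u,u)$ is a positive combination of nonpositive local terms. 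So every localization of $u$ vanishes, $x_Fu=0$ for all $F$, and $u=0$.)

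For a concrete failure, take $\R[x_1,x_2]/(x_1x_2,\,x_1^3-x_2^3)$ with $\ell=x_1+x_2$. It satisfies Poincar\'e duality, hard Lefschetz and all your local hypotheses (the local forms $\degr(x_iuv)$ have rank one). Yet $\degr(\ell uv)$ is positive definite in degree one.

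For matroids the conclusion is true, but it needs an input your sketch never uses. One has $Q_c(x_F,x_G)\ge 0$ for $F\ne G$, with strict inequality exactly for comparable pairs. The comparability graph of $\F^\circ(M)$ is connected, so the Gram matrix $H=(Q_c(x_F,x_G))$ is irreducible. Local HR gives $Q_c(u,u)\le\sum_F c(F)\,Q_c(x_F,u)^2/Q_c(x_F,\ell)$, that is, $H\preceq HDH$ with $D=\diag\bigl(c(F)/Q_c(x_F,\ell)\bigr)$. Hence $K=D^{1/2}HD^{1/2}$ has no eigenvalue in $(0,1)$. Moreover $D^{1/2}Hc$, with $c=(c(F))_F$, is a positive eigenvector of $K$ with eigenvalue $1$. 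By Perron--Frobenius, $1$ is the largest eigenvalue of $K$ and is simple, so it is the only positive one.

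Your second step has the roles of the ingredients reversed. ``Ruling out degeneration along the path'' is hard Lefschetz itself, so as written the step is circular. In \cite{AHK18}, hard Lefschetz for every ample class is the easy consequence of local HR and Poincar\'e duality sketched above. What the matroidal flips (or the semi-small decomposition of \cite{BHMPW22}) actually supply is the Hodge--Riemann relations for a single ample class. Deformation through the connected cone of strictly submodular functions then propagates them, and that single-class argument is precisely the part you leave undone.

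So the proposal does not yet prove the theorem. You can either carry out the single-class argument across flips or decomposition steps, or replace both of your steps by the Perron--Frobenius argument above. The latter applies for $r\ge 4$, uses your $r=3$ computation as base case, and closes the induction on rank using only degrees $0$ and $1$.
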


The following corollary follows from~\cite[Lemmas~9.6 and~9.7]{AHK18}.
For completeness, we include a proof based on Theorem~\ref{thm:chow-hr}.
The corollary can also be deduced from the Lorentzian property of the matroid volume polynomial by approximation; see~\cite[Section~4]{BL26}.

\begin{coro}\label{cor:onepositive}
Let $M$ be a loopless matroid of rank $k+2$, where $k\ge1$.
Define a symmetric bilinear map on $A^1(M)$ by
\begin{equation}\label{Bbeta}
B_k(u,v)=\degr_M(\beta_M^{k-1}uv).
\end{equation}
Let $m=\dim_\R A^1(M)$.
For any basis $e_1,\ldots,e_m$ of $A^1(M)$, the matrix $(B_k(e_i,e_j))_{i,j=1}^m$ has exactly one positive eigenvalue.
Moreover, $B_k(\alpha_M,\alpha_M)=\nu_{k-1}(M)>0$.
\end{coro}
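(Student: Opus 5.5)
The plan is to get the ``at least one'' half of the first claim from the second claim, and the ``at most one'' half by approximating $\beta_M$ with strictly submodular classes $\ell(c)$ and passing the Hodge--Riemann relations of Theorem~\ref{thm:chow-hr} to the limit.

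\emph{The second claim.} Here $r=k+2$, so $r-1-i=2$ when $i=k-1$. Proposition~\ref{prop:degree-coeff} then gives
\[
B_k(\alpha_M,\alpha_M)=\degr_M(\alpha_M^{2}\beta_M^{k-1})=\nu_{k-1}(M),
\]
which is positive by the broken-circuit theorem.

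\emph{At least one positive eigenvalue.} By Sylvester's law of inertia, the number of positive eigenvalues of the Gram matrix does not depend on the basis. It equals the maximal dimension of a subspace on which $B_k$ is positive definite. Since $B_k(\alpha_M,\alpha_M)>0$, the line $\R\alpha_M$ is such a subspace, so there is at least one positive eigenvalue.

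\emph{Writing $\beta_M$ as a limit of $\ell(c)$'s.} Fix $e\in E$ and define
\[
c_0(S)=\mathbf 1[S\neq\varnothing]-\mathbf 1[e\in S].
\]
Then $c_0(\varnothing)=c_0(E)=0$. The map $S\mapsto\mathbf 1[S\ne\varnothing]=\min(|S|,1)$ is submodular, and $S\mapsto\mathbf 1[e\in S]$ is modular, so $c_0$ is submodular. For $F\in\F^\circ(M)$ we have $c_0(F)=1$ exactly when $e\notin F$, so $\ell(c_0)=\beta_M$.

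Next let $c_1(S)=|S|\,(|E|-|S|)$. This vanishes at $\varnothing$ and $E$, and it is strictly submodular. Indeed, if $S,T$ are incomparable, then $|S\cap T|<|S|,|T|<|S\cup T|$ and $|S|+|T|=|S\cap T|+|S\cup T|$, so strict concavity of $x\mapsto x(|E|-x)$ gives the strict inequality.

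Hence $c_\varepsilon=c_0+\varepsilon c_1$ is strictly submodular for every $\varepsilon>0$. Put $L_\varepsilon=\ell(c_\varepsilon)$; then $L_\varepsilon\to\beta_M$ as $\varepsilon\to0$.

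\emph{Signature of the approximating forms.} Consider
\[
Q_\varepsilon(u,v)=\degr_M(L_\varepsilon^{k-1}uv).
\]
Since $r-3=k-1$ and $r=k+2\ge3$, Theorem~\ref{thm:chow-hr} applies. It gives $Q_\varepsilon(L_\varepsilon,L_\varepsilon)=\degr_M(L_\varepsilon^{k+1})>0$.

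Let $W=\{u\in A^1(M):L_\varepsilon^{k}u=0\}$. Because $A^{k+1}(M)$ is one-dimensional with $\degr_M$ an isomorphism, $W$ is the kernel of the linear functional $u\mapsto\degr_M(L_\varepsilon^k u)$. This functional is nonzero at $L_\varepsilon$, so $W$ is a hyperplane not containing $L_\varepsilon$, and $A^1(M)=\R L_\varepsilon\oplus W$.

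The second part of Theorem~\ref{thm:chow-hr} says that $Q_\varepsilon$ is negative definite on $W$. Therefore $Q_\varepsilon$ has no positive-definite subspace of dimension $2$, because any such subspace would meet $W$ nontrivially. So $Q_\varepsilon$ has exactly one positive eigenvalue.

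\emph{Passing to the limit.} As $\varepsilon\to0$, $Q_\varepsilon\to B_k$ entrywise in any fixed basis. Suppose $B_k$ had two positive eigenvalues. Then it would be positive definite on some $2$-dimensional subspace $P$. Positive definiteness on a fixed subspace is an open condition, so $Q_\varepsilon$ would also be positive definite on $P$ for small $\varepsilon$, contradicting the previous step. Hence $B_k$ has at most one positive eigenvalue, and by the earlier step exactly one.

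The main obstacle is this limiting step. It requires exhibiting $\beta_M$ exactly as $\ell(c_0)$ for a submodular $c_0$ and perturbing it into the strictly submodular cone. It also requires noting that only the bound ``at most one'' survives the limit, while ``at least one'' must come separately from $\nu_{k-1}(M)>0$.
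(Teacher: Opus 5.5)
Your proof is correct and follows essentially the paper's route. You perturb a submodular $c_0$ with $\ell(c_0)=\beta_M$ by $\varepsilon|S|(|E|-|S|)$ to get a strictly submodular $c_\varepsilon$, apply Theorem~\ref{thm:chow-hr} to $\ell(c_\varepsilon)$, and let $\varepsilon\to0$.

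The differences are minor. You build $c_0$ from a fixed element $e$ instead of averaging, which gives $1-|S|/|E|$. You pass to the limit via openness of positive-definiteness on a $2$-plane, rather than via the reverse Cauchy--Schwarz inequality $B_k(u,u)B_k(\beta_M,\beta_M)\le B_k(u,\beta_M)^2$ and the hyperplane $\beta_M^{\perp_{B_k}}$. You get the lower bound from $B_k(\alpha_M,\alpha_M)>0$ rather than from $B_k(\beta_M,\beta_M)=\nu_{k+1}(M)>0$.
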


\begin{proof}
For $\epsilon>0$, define $c_\epsilon:2^E\to\R$ by
\begin{equation*}
c_\epsilon(S)=
\begin{cases}
0,
& S=\varnothing,\\[1mm]
1-\dfrac{|S|}{|E|}
+\epsilon |S|(|E|-|S|),
& \varnothing\subsetneq S\subseteq E.
\end{cases}
\end{equation*}
For incomparable subsets $S,T\subseteq E$, we have
\begin{equation*}
c_\epsilon(S)+c_\epsilon(T)
-c_\epsilon(S\cap T)-c_\epsilon(S\cup T) \ge 
2\epsilon |S\setminus T|\,|T\setminus S|>0.
\end{equation*}
Thus $c_\epsilon$ is strictly submodular.

Since $\beta_M$ does not depend on the choice of $e$,
\begin{equation*}
\beta_M
=
\frac1{|E|}
\sum_{e\in E}
\sum_{\substack{F\in\F^\circ(M)\\e\notin F}}x_F
=
\sum_{F\in\F^\circ(M)}
\left(1-\frac{|F|}{|E|}\right)x_F.
\end{equation*}
By \eqref{l(c)}, we have
\begin{equation*}
\ell(c_\epsilon)
=
\beta_M+
\epsilon\sum_{F\in\F^\circ(M)}
|F|(|E|-|F|)x_F.
\end{equation*}
Thus $\ell(c_\epsilon)\to\beta_M$ in $A^1(M)$ as $\epsilon\to0^+$.

By Theorem~\ref{thm:chow-hr}, $\degr_M(\ell(c_\epsilon)^{k+1})>0$.
For $u\in A^1(M)$, define
\begin{equation*}
u_\epsilon=u-\frac{\degr_M(\ell(c_\epsilon)^k u)}
{\degr_M(\ell(c_\epsilon)^{k+1})}\ell(c_\epsilon).
\end{equation*}
Then
\begin{equation*}
\degr_M(\ell(c_\epsilon)^k u_\epsilon)
=
\degr_M(\ell(c_\epsilon)^k u)-
\frac{\degr_M(\ell(c_\epsilon)^k u)}
{\degr_M(\ell(c_\epsilon)^{k+1})}
\degr_M(\ell(c_\epsilon)^{k+1})
=0.
\end{equation*}
Since $\ell(c_\epsilon)^k u_\epsilon\in A^{k+1}(M)$ and the degree map is an isomorphism on $A^{k+1}(M)$, we have $\ell(c_\epsilon)^k u_\epsilon=0$.
By the definition of $u_\epsilon$ and Theorem~\ref{thm:chow-hr}, we have
\begin{equation*}
\degr_M(\ell(c_\epsilon)^{k-1}u_\epsilon^2)=
\degr_M(\ell(c_\epsilon)^{k-1}u^2)
-
\frac{\degr_M(\ell(c_\epsilon)^k u)^2}
{\degr_M(\ell(c_\epsilon)^{k+1})}
\le 0.
\end{equation*}
Multiplying by $\degr_M(\ell(c_\epsilon)^{k+1})>0$ and letting $\epsilon\to0^+$, we obtain
\begin{equation*}
B_k(u,u)B_k(\beta_M,\beta_M)
\le B_k(u,\beta_M)^2.
\end{equation*}
By Proposition~\ref{prop:degree-coeff}, we have
\begin{equation*}
B_k(\beta_M,\beta_M)
=\degr_M(\beta_M^{k+1})
=\nu_{k+1}(M)>0.
\end{equation*}

Let
\begin{equation*}
U=\{u\in A^1(M):B_k(u,\beta_M)=0\}.
\end{equation*}
Since $B_k(\beta_M,\beta_M)>0$, the linear functional $u\mapsto B_k(u,\beta_M)$ is nonzero. 
Thus $\dim_\R U=\dim_\R A^1(M)-1$.
For $u\in U$, we have
\begin{equation*}
B_k(u,u)B_k(\beta_M,\beta_M)
\le B_k(u,\beta_M)^2=0.
\end{equation*}
Since $B_k(\beta_M,\beta_M)>0$, it follows that $B_k(u,u)\le0$.

Fix a basis $e_1,\ldots,e_m$ of $A^1(M)$.
Suppose, for a contradiction, that the matrix $(B_k(e_i,e_j))_{i,j=1}^m$ has at least two positive eigenvalues. 
By the spectral theorem~\cite[Theorem~4.1.5]{HJ13}, there exists a two-dimensional subspace $V\subseteq A^1(M)$ such that $B_k(v,v)>0$ for every nonzero $v\in V$.
However, by the dimension formula for subspaces \cite[p.~4]{HJ13}, we have
\begin{equation*}
\dim_\R(V\cap U)
\ge \dim_\R V+\dim_\R U-\dim_\R A^1(M)
=1.
\end{equation*}
Let $0\ne v\in V\cap U$.
Since $v\in V$, we have $B_k(v,v)>0$.
On the other hand, $v\in U$ gives $B_k(v,v)\le0$, a contradiction.
Thus the matrix $(B_k(e_i,e_j))_{i,j=1}^m$ has at most one positive eigenvalue.
Since $B_k(\beta_M,\beta_M)>0$, it has exactly one positive eigenvalue.

Finally, by Proposition~\ref{prop:degree-coeff}, we have
\begin{equation*}
B_k(\alpha_M,\alpha_M)
=\degr_M(\alpha_M^2\beta_M^{k-1})
=\nu_{k-1}(M)>0.
\end{equation*}
This completes the proof.
\end{proof}

\subsection{Pullback and pushforward maps}
\label{subsec:maps}

In this subsection, all tensor products are over $\R$.
Recall the pullback and pushforward maps from~\cite{BHMPW22}.
For $F\in\F^\circ(M)$, the \emph{pullback} is the graded algebra homomorphism
\begin{equation*}
\varphi_F:A^*(M)\longrightarrow
A^*(M/F)\otimes A^*(M|F)
\end{equation*}
defined on generators by
\begin{equation}\label{eq:pullback}
\varphi_F(x_G)=
\begin{cases}
1\otimes x_G,
& G\subsetneq F,\\[1mm]
x_{G\setminus F}\otimes1,
& F\subsetneq G,\\[1mm]
-\beta_{M/F}\otimes1-1\otimes\alpha_{M|F},
& G=F,\\[1mm]
0,
& G\text{ and }F\text{ are incomparable},
\end{cases}
\end{equation}
for $G\in\F^\circ(M)$.
By \cite[Proposition~2.24]{BHMPW22}, we have
\begin{equation}\label{eq:pullback-ab}
\varphi_F(\alpha_M)=\alpha_{M/F}\otimes1,
\qquad
\varphi_F(\beta_M)=1\otimes\beta_{M|F}.
\end{equation}
The \emph{pushforward} is the linear map
\begin{equation*}
\psi_F:A^*(M/F)\otimes A^*(M|F)
\longrightarrow A^*(M)
\end{equation*}
defined on monomials by
\begin{equation*}
\psi_F\left(
\prod_{i=1}^{a}x_{G_i\setminus F}
\otimes
\prod_{j=1}^{b}x_{H_j}
\right)
=
x_F\prod_{i=1}^{a}x_{G_i}\prod_{j=1}^{b}x_{H_j}\quad \text{for all }a,b\in\Z_{\ge0},
\end{equation*}
where $G_i,H_j\in\mathcal{L}(M)$ satisfy $F\subsetneq G_i\subsetneq E$ and $\varnothing\subsetneq H_j\subsetneq F$.
Let empty products be $1$.
In particular, $\psi_F(1\otimes1)=x_F$.
Note that the map $\psi_F$ increases degree by one. 

The degree maps of $M/F$ and $M|F$ define the linear map
\begin{equation*}
\degr_{M/F}\otimes\degr_{M|F}:
A^*(M/F)\otimes A^*(M|F)\longrightarrow\R
\end{equation*}
by
\begin{equation*}
(\degr_{M/F}\otimes\degr_{M|F})(u\otimes v)
=
\degr_{M/F}(u)\degr_{M|F}(v)
\end{equation*}
for $u\in A^*(M/F)$ and $v\in A^*(M|F)$.

\begin{prop}[{\cite{BHMPW22}}]\label{prop:chow-maps}
Let $F\in\F^\circ(M)$.
For every $z\in A^*(M)$, $u\in A^*(M/F)$ and $v\in A^*(M|F)$, we have
\begin{equation}\label{eq:projection}
z\,\psi_F(u\otimes v)
=
\psi_F\bigl(\varphi_F(z)(u\otimes v)\bigr)
\end{equation}
and
\begin{equation}\label{eq:push-degree}
\degr_M\bigl(\psi_F(u\otimes v)\bigr)
=
\degr_{M/F}(u)\degr_{M|F}(v).
\end{equation}
\end{prop}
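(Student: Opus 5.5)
The plan is to first make sure $\psi_F$ is well defined, then prove \eqref{eq:projection} by reducing to generators $z=x_G$, and finally prove \eqref{eq:push-degree} by checking it on flag monomials.

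\emph{Well-definedness of $\psi_F$.} On the polynomial level, $\psi_F$ sends a pair of monomials to $x_F$ times their product. So it suffices to check that generators of $(I_{M/F}+J_{M/F})\otimes A^*(M|F)$ and $A^*(M/F)\otimes(I_{M|F}+J_{M|F})$ go into $I_M+J_M$.
\begin{itemize}
\item Incomparable pairs in $[F,E]$ or in $[\varnothing,F]$ are incomparable in $\mathcal L(M)$, so their images lie in $I_M$.
\item For $a,b\notin F$, consider $x_F$ times the $M$-relation $\sum_{G\ni a}x_G-\sum_{G\ni b}x_G$. Flats incomparable with $F$ are killed by $I_M$. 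A flat $G\subseteq F$ cannot contain $a$ or $b$. What remains is exactly $x_F$ times the lift of the $M/F$-relation.
\item For $a,b\in F$, do the same. Flats $G\supseteq F$ contain both $a$ and $b$, so their terms cancel. Incomparable flats are killed, and what remains is $x_F$ times the lift of the $M|F$-relation.
\end{itemize}
Multiplying these relations by arbitrary monomials stays in the ideal, so $\psi_F$ descends to the quotient. I expect this to be the main bookkeeping obstacle.

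\emph{Projection formula.} Both sides of \eqref{eq:projection} are linear in $z$, and $\varphi_F$ is an algebra homomorphism. So if the formula holds for $z_1$ and $z_2$, then
\begin{equation*}
z_1z_2\psi_F(w)=z_1\psi_F(\varphi_F(z_2)w)=\psi_F(\varphi_F(z_1)\varphi_F(z_2)w),
\end{equation*}
and it suffices to treat $z=x_G$. Take $u\otimes v$ to be a monomial.
\begin{itemize}
\item If $G\subsetneq F$ or $F\subsetneq G$, the formula follows immediately from the definition of $\psi_F$.
\item If $G$ is incomparable with $F$, both sides vanish since $x_Gx_F\in I_M$.
\item The substantive case is $G=F$, where we need $x_F^2=\psi_F(-\beta_{M/F}\otimes1-1\otimes\alpha_{M|F})$. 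Choose $e\in F$ and $f\notin F$. By $J_M$,
\begin{equation*}
x_F=\sum_{G\ni f}x_G-\sum_{G\ni e,\,G\ne F}x_G .
\end{equation*}
Multiply by $x_F$ and discard incomparable $G$. Comparable flats containing $f$ are exactly those $G\supsetneq F$ with $f\in G$; they give $\psi_F(\alpha_{M/F}\otimes1)$, computing $\alpha_{M/F}$ with the element $f$. Comparable flats $G\ne F$ containing $e$ split in two kinds. Those with $G\subsetneq F$ give $\psi_F(1\otimes\alpha_{M|F})$. Those with $G\supsetneq F$ give all of them, namely $\psi_F\bigl(\sum_{G\supsetneq F}x_{G\setminus F}\otimes 1\bigr)$.
\end{itemize}
Since $\alpha_{M/F}-\sum_{G\supsetneq F}x_{G\setminus F}=-\beta_{M/F}$ (computed with $f$), we get exactly the required identity. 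Multiplying by the monomial $u\otimes v$ then gives the general case.

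\emph{Degree formula.} Let $\rk_M(F)=k$. Both sides of \eqref{eq:push-degree} are bilinear, so take $u,v$ homogeneous. Unless $\deg u=r-k-1$ and $\deg v=k-1$, the degree of $\psi_F(u\otimes v)$ differs from $r-1$, and both sides vanish. In the top degrees, Proposition~\ref{prop:chow-degree}(ii) lets us take $u$ and $v$ to be complete-flag monomials of $M/F$ and $M|F$, each of degree $1$. Then $\psi_F(u\otimes v)$ is the complete-flag monomial of $M$ through $F$, which has degree $1$ by Proposition~\ref{prop:chow-degree}(ii). This gives $1=1\cdot1$.
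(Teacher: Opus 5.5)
Your proposal is correct. The paper gives no proof of its own here, since it imports the statement from \cite{BHMPW22}, and your argument is the standard direct verification: check that $x_F$ times each relation of $M/F$ and $M|F$ lies in $I_M+J_M$, reduce the projection formula to $z=x_G$ with the key identity $x_F^2=\psi_F(-\beta_{M/F}\otimes1-1\otimes\alpha_{M|F})$ coming from a linear relation with $e\in F$ and $f\notin F$, and evaluate degrees on complete-flag monomials through $F$. One small slip: when $\deg u+\deg v=r-2$ but $(\deg u,\deg v)\ne(r-k-1,k-1)$, the element $\psi_F(u\otimes v)$ does lie in degree $r-1$; the vanishing there comes instead from $u=0$ or $v=0$, by Proposition~\ref{prop:chow-degree}\,(i).
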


By Proposition~\ref{prop:chow-maps}, we obtain the following corollary.

\begin{coro}\label{cor:local-degree}
Let $F\in\F^\circ(M)$.
For every $z\in A^{r-2}(M)$, we have
\begin{equation*}
\degr_M(z x_F)
=
(\degr_{M/F}\otimes\degr_{M|F})
\bigl(\varphi_F(z)\bigr).
\end{equation*}
\end{coro}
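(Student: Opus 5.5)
The corollary follows from Proposition~\ref{prop:chow-maps} applied with the trivial classes $u=1\in A^0(M/F)$ and $v=1\in A^0(M|F)$. By definition of the pushforward, $\psi_F(1\otimes1)=x_F$. So for any $z\in A^{r-2}(M)$, the projection formula \eqref{eq:projection} gives
\begin{equation*}
z\,x_F=z\,\psi_F(1\otimes1)=\psi_F\bigl(\varphi_F(z)(1\otimes1)\bigr)=\psi_F\bigl(\varphi_F(z)\bigr).
\end{equation*}
Applying $\degr_M$ to both sides then reduces the claim to the identity
\begin{equation*}
\degr_M\bigl(\psi_F(w)\bigr)=(\degr_{M/F}\otimes\degr_{M|F})(w)
\quad\text{for } w=\varphi_F(z)\in A^*(M/F)\otimes A^*(M|F).
\end{equation*}

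The one point needing care is that \eqref{eq:push-degree} is stated only for pure tensors $u\otimes v$, whereas $\varphi_F(z)$ is in general a sum of such tensors. Every element of the tensor product is a finite sum $w=\sum_j u_j\otimes v_j$. Both $\psi_F$ and $\degr_{M/F}\otimes\degr_{M|F}$ are linear, and $\degr_M$ is linear on all of $A^*(M)$ by its extension by zero. Summing \eqref{eq:push-degree} over $j$ therefore gives the identity for arbitrary $w$.

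A grading check is not needed for correctness, but it shows the identity is not vacuous. Since $\varphi_F$ is graded, $\varphi_F(z)$ lies in total degree $r-2$. The matroid $M/F$ has rank $r-\rk_M(F)$ and $M|F$ has rank $\rk_M(F)$, so their top degrees sum to $(r-\rk F-1)+(\rk F-1)=r-2$. This matches $\psi_F$ raising degree by one into $A^{r-1}(M)$. Components of $\varphi_F(z)$ not in bidegree $(r-\rk F-1,\rk F-1)$ are killed on both sides: on the right by the extended degree maps, and on the left by \eqref{eq:push-degree} itself.

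The only potential obstacle is the linearity extension from pure tensors, and it is immediate.
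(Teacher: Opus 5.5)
Your proposal is correct and follows essentially the same route as the paper: you apply the projection formula with $u=1$, $v=1$ to get $z\,x_F=\psi_F(\varphi_F(z))$, then write $\varphi_F(z)$ as a finite sum of pure tensors and use linearity together with \eqref{eq:push-degree}. Your grading check is a harmless addition that the paper omits.
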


\begin{proof}
Taking $u=1\in A^*(M/F)$ and $v=1\in A^*(M|F)$ in \eqref{eq:projection}, we obtain
\begin{equation*}
z\,\psi_F(1\otimes1)
=
\psi_F\bigl(\varphi_F(z)(1\otimes1)\bigr).
\end{equation*}
Since $\psi_F(1\otimes1)=x_F$ and $1\otimes1$ is the multiplicative identity in $A^*(M/F)\otimes A^*(M|F)$, we have
\begin{equation*}
zx_F
=
\psi_F\bigl(\varphi_F(z)(1\otimes1)\bigr)
=
\psi_F\bigl(\varphi_F(z)\bigr).
\end{equation*}

Since $\varphi_F(z)\in A^*(M/F)\otimes A^*(M|F)$,
\begin{equation*}
\varphi_F(z)=\sum_{j=1}^{s}u_j\otimes v_j,
\end{equation*}
where $u_j\in A^*(M/F)$ and $v_j\in A^*(M|F)$.
By \eqref{eq:push-degree}, we have
\begin{equation*}
\begin{aligned}
\degr_M(zx_F)
&=\degr_M\bigl(\psi_F(\varphi_F(z))\bigr)\\
&=\sum_{j=1}^{s}
\degr_M\bigl(\psi_F(u_j\otimes v_j)\bigr)\\
&=\sum_{j=1}^{s}
\degr_{M/F}(u_j)\degr_{M|F}(v_j)\\
&=(\degr_{M/F}\otimes\degr_{M|F})
\bigl(\varphi_F(z)\bigr).
\end{aligned}
\end{equation*}
This completes the proof.
\end{proof}

\subsection{A Schur complement inequality}
\label{subsec:linear-algebra}

Let $V$ be a finite dimensional real vector space, and let $B:V\times V\to\R$ be a symmetric bilinear map, meaning that $B$ is linear in each variable and satisfies $B(u,v)=B(v,u)$ for all $u,v\in V$.
For any basis $e_1,\ldots,e_{\dim V}$ of $V$, the map $B$ is represented by the symmetric matrix $(B(e_i,e_j))_{i,j=1}^{\dim V}$.
By Sylvester's law of inertia~\cite[Theorem~4.5.8]{HJ13}, the numbers of positive, negative, and zero eigenvalues of a matrix of $B$ are independent of the chosen basis.

For $a\in V$, define
\begin{equation*}
a^{\perp_B}=\{u\in V:B(a,u)=0\}.
\end{equation*}

\begin{lem}\label{lem:primitive-negative}
Suppose that $B$ has exactly one positive eigenvalue, and let $a\in V$ satisfy $B(a,a)>0$.
Then $B(u,u)\le0$ for every $u\in a^{\perp_B}$.
\end{lem}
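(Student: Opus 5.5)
The plan is to argue by contradiction with a dimension count, exactly as in the last step of the proof of Corollary~\ref{cor:onepositive}, but with the roles of the subspaces exchanged.

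Suppose some $u\in a^{\perp_B}$ satisfies $B(u,u)>0$. First, $u$ and $a$ are linearly independent. Indeed, if $u=ta$ for some $t\in\R$, then $0=B(a,u)=tB(a,a)$, and since $B(a,a)>0$ this forces $t=0$, so $u=0$, which contradicts $B(u,u)>0$. Let $W=\operatorname{span}\{a,u\}$, a two-dimensional subspace of $V$.

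Next, $B$ is positive definite on $W$. For $v=sa+tu$ we compute
\begin{equation*}
B(v,v)=s^2B(a,a)+2st\,B(a,u)+t^2B(u,u)=s^2B(a,a)+t^2B(u,u),
\end{equation*}
which is strictly positive unless $s=t=0$.

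It remains to contradict the hypothesis that $B$ has exactly one positive eigenvalue. Choose a basis of $V$ that diagonalizes the symmetric matrix of $B$; this is possible by the spectral theorem, and Sylvester's law of inertia makes the eigenvalue counts basis-independent. Let $N$ be the span of the basis vectors whose eigenvalues are $\le 0$. Since exactly one eigenvalue is positive, $\dim N=\dim V-1$, and $B(v,v)\le0$ for every $v\in N$. By the dimension formula, $\dim(W\cap N)\ge 2+(\dim V-1)-\dim V=1$. Any nonzero $v\in W\cap N$ then satisfies both $B(v,v)>0$ and $B(v,v)\le0$, which is impossible. Hence $B(u,u)\le0$ for all $u\in a^{\perp_B}$.

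The only step that needs care is the last one: turning the eigenvalue count into the statement that $V$ contains a codimension-one subspace on which $B$ is negative semidefinite. This is the standard diagonalization argument cited above, so I do not expect a real obstacle there.
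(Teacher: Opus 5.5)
Your proof is correct, and it takes a different route from the paper's. The paper first shows $V=(\R a)\oplus a^{\perp_B}$. Because this splitting is $B$-orthogonal, the Gram matrix in an adapted basis is block diagonal, with blocks $B(a,a)>0$ and the Gram matrix of $B|_{a^{\perp_B}}$. Sylvester's law of inertia then forces the second block to have no positive eigenvalue, which is exactly $B\le0$ on $a^{\perp_B}$.

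You never split $V$. You use the orthogonality $B(a,u)=0$ only to build a two-dimensional subspace $W=\operatorname{span}\{a,u\}$ on which $B$ is positive definite. You then intersect $W$ with the hyperplane $N$, coming from an orthogonal diagonalization, on which $B\le0$. This is the same intersection argument the paper uses at the end of Corollary~\ref{cor:onepositive}.

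The paper's version is shorter, but it leaves implicit that inertia is additive over a $B$-orthogonal direct sum. Yours is fully self-contained and needs only the spectral theorem. In fact, since you diagonalize by an orthogonal change of the given basis, the diagonal entries are the original eigenvalues, so you do not need Sylvester's law even for basis independence. Your route also shows that the paper's two linear-algebra arguments rest on the same idea: a positive-definite plane cannot meet a nonpositive hyperplane only in $0$.
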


\begin{proof}
Let $\R a=\{ta:t\in\R\}$.
For every $v\in V$, we have
\begin{equation*}
B\left(a,v-\frac{B(a,v)}{B(a,a)}a\right)
=
B(a,v)-\frac{B(a,v)}{B(a,a)}B(a,a)
=0.
\end{equation*}
Hence $V=\R a+a^{\perp_B}$.
If $ta\in a^{\perp_B}$, then $B(a,ta)=tB(a,a)=0$.
Since $B(a,a)>0$, we have $t=0$.
Therefore $(\R a)\cap a^{\perp_B}=\{0\}$, and $V=(\R a)\oplus a^{\perp_B}$.
Since $B(a,a)>0$ and $B$ has exactly one positive eigenvalue, Sylvester's law of inertia implies $B(u,u)\le0$ for every $u\in a^{\perp_B}$.
\end{proof}

We use Lemma~\ref{lem:primitive-negative} to prove the following results.

\begin{prop}\label{prop:elimination}
Suppose that $B$ has exactly one positive eigenvalue.
Let $a,y_1,\ldots,y_m\in V$ satisfy $B(a,a)>0$, $B(a,y_i)=0$, and $B(y_i,y_i)=-d_i<0$ for $1\le i\le m$, with $B(y_i,y_j)=0$ whenever $i\ne j$.
For every $v\in V$, define
\begin{equation*}
u=v-\frac{B(a,v)}{B(a,a)}a
+\sum_{i=1}^m\frac{B(v,y_i)}{d_i}y_i.
\end{equation*}
Then
\begin{equation}\label{eq:elimination}
\frac{B(a,v)^2}{B(a,a)}-B(v,v)
\ge
\sum_{i=1}^m\frac{B(v,y_i)^2}{d_i}.
\end{equation}
\end{prop}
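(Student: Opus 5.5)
The plan is to apply Lemma~\ref{lem:primitive-negative} to the vector $u$ defined in the statement, and then expand $B(u,u)$ using the orthogonality hypotheses. We take the lemma as the only input: it needs $B$ to have exactly one positive eigenvalue and $B(a,a)>0$, both of which are assumed. The inequality \eqref{eq:elimination} should then drop out directly from $B(u,u)\le 0$.

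\textbf{Step 1: $u\in a^{\perp_B}$.} Using $B(a,y_i)=0$ for every $i$, we compute
\[
B(a,u)=B(a,v)-\frac{B(a,v)}{B(a,a)}B(a,a)+\sum_{i=1}^m\frac{B(v,y_i)}{d_i}B(a,y_i)=0 .
\]
Lemma~\ref{lem:primitive-negative} then gives $B(u,u)\le 0$.

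\textbf{Step 2: expand $B(u,u)$.} Write $\lambda=B(a,v)/B(a,a)$ and $c_i=B(v,y_i)/d_i$, so that $u=v-\lambda a+\sum_i c_iy_i$. Expanding bilinearly, the cross terms $B(a,y_i)$ and $B(y_i,y_j)$ for $i\ne j$ vanish by hypothesis. This leaves
\[
B(u,u)=B(v,v)+\lambda^2B(a,a)-\sum_i c_i^2d_i-2\lambda B(a,v)+2\sum_i c_iB(v,y_i).
\]
We used $B(y_i,y_i)=-d_i$ for the third term. Substituting the values of $\lambda$ and $c_i$, we get $\lambda^2B(a,a)-2\lambda B(a,v)=-B(a,v)^2/B(a,a)$. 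Similarly, $-c_i^2d_i+2c_iB(v,y_i)=B(v,y_i)^2/d_i$. Hence
\[
B(u,u)=B(v,v)-\frac{B(a,v)^2}{B(a,a)}+\sum_{i=1}^m\frac{B(v,y_i)^2}{d_i}.
\]

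\textbf{Step 3: conclude.} Combining Step 2 with $B(u,u)\le 0$ from Step 1 and rearranging yields \eqref{eq:elimination}.

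No step is a genuine obstacle; the only care needed is tracking signs, since $B(y_i,y_i)=-d_i$ is negative. It is also worth recording the consequence for later equality analysis: equality in \eqref{eq:elimination} holds exactly when $B(u,u)=0$.
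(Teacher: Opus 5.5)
Your proposal is correct and follows the paper's proof essentially verbatim: you show $B(a,u)=0$, expand $B(u,u)$ using the orthogonality relations to get $B(v,v)-B(a,v)^2/B(a,a)+\sum_i B(v,y_i)^2/d_i$, and conclude from $B(u,u)\le 0$ via Lemma~\ref{lem:primitive-negative}. Your closing remark that equality holds exactly when $B(u,u)=0$ follows directly from this identity.
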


\begin{proof}
By bilinearity of $B$ and $B(a,y_i)=0$, we have
\begin{equation*}
B(a,u)
=B(a,v)
-\frac{B(a,v)}{B(a,a)}B(a,a)
+\sum_{i=1}^m\frac{B(v,y_i)}{d_i}B(a,y_i)=0.
\end{equation*}
Thus $u\in a^{\perp_B}$.
Using $B(a,y_i)=0$, $B(y_i,y_j)=0$ for $i\ne j$, and $B(y_i,y_i)=-d_i$, we obtain
\begin{equation*}
\begin{aligned}
B(u,u)
&=B(v,v)
-2\frac{B(a,v)^2}{B(a,a)}
+\frac{B(a,v)^2}{B(a,a)}
+2\sum_{i=1}^m\frac{B(v,y_i)^2}{d_i}
-\sum_{i=1}^m\frac{B(v,y_i)^2}{d_i}\\
&=B(v,v)-\frac{B(a,v)^2}{B(a,a)}
+\sum_{i=1}^m\frac{B(v,y_i)^2}{d_i}.
\end{aligned}
\end{equation*}
Since $u\in a^{\perp_B}$, Lemma~\ref{lem:primitive-negative} implies $B(u,u)\le0$.
Therefore
\begin{equation*}
\frac{B(a,v)^2}{B(a,a)}-B(v,v)
-\sum_{i=1}^m\frac{B(v,y_i)^2}{d_i}
=-B(u,u)\ge0.
\end{equation*}
This proves \eqref{eq:elimination}.
\end{proof}

\begin{rem}\label{rem:schur}
The inequality in Proposition~\ref{prop:elimination}
can be expressed as the nonpositivity of a Schur complement.
For a symmetric matrix
\begin{equation*}
\left(\begin{matrix}
A&C\\ C^{\mathsf T}&P
\end{matrix}\right)
\end{equation*} 
with $P$ invertible, the~\emph{Schur complement} of $P$ is $A-CP^{-1}C^{\mathsf T}$.
In Proposition~\ref{prop:elimination}, take
\begin{equation*}
\begin{aligned}
A&=B(v,v),\\
C&=\bigl(B(v,a),B(v,y_1),\ldots,B(v,y_m)\bigr),\\
P&=\diag\bigl(B(a,a),-d_1,\ldots,-d_m\bigr).
\end{aligned}
\end{equation*}
By \eqref{eq:elimination}, the Schur complement of $P$
satisfies
\begin{equation*}
A-CP^{-1}C^{\mathsf T}
=
B(v,v)-\frac{B(a,v)^2}{B(a,a)}
+\sum_{i=1}^{m}\frac{B(v,y_i)^2}{d_i}
\le0.
\end{equation*}
\end{rem}
\section{Proof of the Main Theorem}
\label{sec:weighted}
In this section, we prove Theorem~\ref{thm:weighted-incidence}.
Let $M$ be a loopless matroid of rank $i+2$, where $i\ge1$.
Recall the symmetric bilinear map defined in \eqref{Bbeta}:
\begin{equation*}
B_i(u,v)=\degr_M(\beta_M^{i-1}uv),
\qquad u,v\in A^1(M).
\end{equation*}

We first compute the following results needed in the proof.

\begin{prop}\label{prop:intersection}
Let $M$ be a loopless matroid of rank $i+2$, where $i\ge1$.
For $F\in\F_i(M)$ and $H\in\F_{i+1}(M)$, we have
\begin{align*}
&B_i(\alpha_M,x_F)
=\nu_{i-1}(M|F),
&
&B_i(\alpha_M,x_H)
=0,
\\
&B_i(x_F,x_F)
=-\bigl(|\Gamma_{i+1}(F)|-1\bigr)\nu_{i-1}(M|F),
&
&B_i(x_H,x_H)
=-\nu_{i-1}(M|H).
\end{align*}
Moreover,
\begin{equation*}
B_i(x_F,x_H)=
\begin{cases}
\nu_{i-1}(M|F),&F\subseteq H,\\
0,&F\not\subseteq H.
\end{cases}
\end{equation*}
For distinct flats $G,G'\in\F^\circ(M)$ of the same rank, we have $B_i(x_G,x_{G'})=0$.
\end{prop}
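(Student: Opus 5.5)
The plan is to compute every pairing with Corollary~\ref{cor:local-degree}, together with the pullback formulas \eqref{eq:pullback} and \eqref{eq:pullback-ab}. Here $M$ has rank $r=i+2$, so $\degr_M$ lives on $A^{i+1}(M)$. For any $u\in A^1(M)$ and $G\in\F^\circ(M)$, Corollary~\ref{cor:local-degree} gives
\begin{equation*}
B_i(u,x_G)=(\degr_{M/G}\otimes\degr_{M|G})\bigl(\varphi_G(\beta_M^{i-1}u)\bigr).
\end{equation*}
Since $\varphi_G$ is an algebra homomorphism and $\varphi_G(\beta_M)=1\otimes\beta_{M|G}$, this equals
\begin{equation*}
(\degr_{M/G}\otimes\degr_{M|G})\bigl((1\otimes\beta_{M|G}^{i-1})\varphi_G(u)\bigr).
\end{equation*}
By Proposition~\ref{prop:chow-degree}(i), only the bidegree component matching the top degrees of $M/G$ and $M|G$ contributes.

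\emph{Case $G=F\in\F_i(M)$.} Here $M/F$ has rank $2$ and top degree $1$, while $M|F$ has rank $i$ and top degree $i-1$.

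For $u=\alpha_M$, we have $\varphi_F(\alpha_M)=\alpha_{M/F}\otimes1$. The pairing is then $\degr_{M/F}(\alpha_{M/F})\,\degr_{M|F}(\beta_{M|F}^{i-1})=1\cdot\nu_{i-1}(M|F)$, by the normalization of $\degr$ and Proposition~\ref{prop:degree-coeff}.

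For $u=x_F$, we have $\varphi_F(x_F)=-\beta_{M/F}\otimes1-1\otimes\alpha_{M|F}$. The second term lands in $A^0(M/F)\otimes A^{i}(M|F)=0$. The first term gives $-\degr_{M/F}(\beta_{M/F})\,\nu_{i-1}(M|F)=-\nu_1(M/F)\,\nu_{i-1}(M|F)$.

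It remains to identify $\nu_1(M/F)$. The rank-$2$ matroid $M/F$ has $k=|\Gamma_{i+1}(F)|$ atoms. Its characteristic polynomial is $t^2-kt+(k-1)=(t-1)(t-(k-1))$, so $\nu_1(M/F)=|\Gamma_{i+1}(F)|-1$.

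\emph{Case $G=H\in\F_{i+1}(M)$.} Here $M/H$ has rank $1$, so $A^*(M/H)=\R$ in degree $0$ with $\degr_{M/H}(1)=1$. Also $M|H$ has top degree $i$.

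For $u=\alpha_M$, we get $\varphi_H(\alpha_M)=\alpha_{M/H}\otimes1$ with $\alpha_{M/H}\in A^1(M/H)=0$, so $B_i(\alpha_M,x_H)=0$.

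For $u=x_H$, the term $-\beta_{M/H}\otimes1$ vanishes for the same reason. The remaining term gives $-\degr_{M|H}(\beta_{M|H}^{i-1}\alpha_{M|H})=-\nu_{i-1}(M|H)$ by Proposition~\ref{prop:degree-coeff}.

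For $u=x_F$ with $F\in\F_i(M)$, \eqref{eq:pullback} gives $\varphi_H(x_F)=1\otimes x_F$ if $F\subsetneq H$, and $0$ otherwise, since $F\not\subseteq H$ means they are incomparable by rank. In the nonzero case we are reduced to computing $\degr_{M|H}(\beta_{M|H}^{i-1}x_F)$. We apply Corollary~\ref{cor:local-degree} once more, now in the loopless matroid $M|H$ of rank $i+1$, using $\varphi_F(\beta_{M|H})=1\otimes\beta_{M|F}$. Since $(M|H)/F$ has rank $1$, this yields $\degr_{M|F}(\beta_{M|F}^{i-1})=\nu_{i-1}(M|F)$.

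\emph{Distinct flats of equal rank.} Two distinct flats $G,G'$ of the same rank are incomparable. Hence $x_Gx_{G'}\in I_M$, and so $B_i(x_G,x_{G'})=0$ immediately.

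\emph{Main obstacle.} The main care point is bookkeeping of bidegrees. We must correctly discard the components that exceed the top degree of a factor, and we must remember that $\alpha$ and $\beta$ of a rank-$1$ matroid vanish. The nested application of Corollary~\ref{cor:local-degree} inside $M|H$ for $B_i(x_F,x_H)$ is the only step that needs a second localization. It requires checking that the pullback identity \eqref{eq:pullback-ab} applies to $M|H$ with $(M|H)|F=M|F$.
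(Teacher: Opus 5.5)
Your proposal is correct and follows the paper's argument essentially step for step: Corollary~\ref{cor:local-degree} combined with the pullback formulas \eqref{eq:pullback} and \eqref{eq:pullback-ab}, and identifying $\nu_1(M/F)=|\Gamma_{i+1}(F)|-1$ for the rank-$2$ contraction. The only difference is in computing $B_i(x_F,x_H)$: the paper localizes at $F$ rather than at $H$, using $\varphi_F(x_H)=x_{H\setminus F}\otimes1$ and $\degr_{M/F}(x_{H\setminus F})=1$ (a complete flag in the rank-$2$ matroid $M/F$), which avoids your second localization inside $M|H$, though your nested computation is also valid.
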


\begin{proof}
For $G\in\F^\circ(M)$ and $z\in A^1(M)$, we have $\beta_M^{i-1}z\in A^i(M)$.
By Corollary~\ref{cor:local-degree} and~\eqref{eq:pullback-ab},
\begin{equation*}
B_i(z,x_G)
=(\degr_{M/G}\otimes\degr_{M|G})
\bigl(\varphi_G(\beta_M^{i-1}z)\bigr)=(\degr_{M/G}\otimes\degr_{M|G})
\bigl((1\otimes\beta_{M|G}^{i-1})\varphi_G(z)\bigr).
\end{equation*}
If $G,G'\in\F^\circ(M)$ are distinct flats of the same rank, then $G$ and $G'$ are incomparable.
By the defining relations of $A^*(M)$, we have $x_Gx_{G'}=0$, and hence $B_i(x_G,x_{G'})=0$.
If $F\not\subseteq H$ and $\rk_M(F)<\rk_M(H)$, then $F$ and $H$ are incomparable, so $B_i(x_F,x_H)=0$.

Let $F\in\F_i(M)$.
Since $M$ has rank $i+2$ and $\rk_M(F)=i$, the matroids $M/F$ and $M|F$ have ranks $2$ and $i$, respectively.
By \eqref{eq:pullback} and \eqref{eq:pullback-ab},
\begin{equation*}
\varphi_F(\alpha_M)=\alpha_{M/F}\otimes1,
\qquad
\varphi_F(x_F)
=-\beta_{M/F}\otimes1-1\otimes\alpha_{M|F}.
\end{equation*}
Since $M|F$ has rank $i$, we have $A^i(M|F)=0$ by Proposition~\ref{prop:chow-degree} \textup{(i)}, and hence $\alpha_{M|F}\beta_{M|F}^{i-1}=0$.

The flats of rank $1$ in the matroid $M/F$ are the sets $H\setminus F$ with $H\in\Gamma_{i+1}(F)$.
By Proposition~\ref{prop:degree-coeff} and \eqref{eq:w-nu}, we obtain
\begin{align*}
&\degr_{M/F}(\alpha_{M/F})
=\nu_0(M/F)=1,\\
&\degr_{M/F}(\beta_{M/F})
=\nu_1(M/F)
=W_1(M/F)-1
=|\Gamma_{i+1}(F)|-1,\\
&\degr_{M|F}(\beta_{M|F}^{i-1})
=\nu_{i-1}(M|F).
\end{align*}
Using $\alpha_{M|F}\beta_{M|F}^{i-1}=0$, we obtain
\begin{equation*}
\begin{aligned}
&B_i(\alpha_M,x_F)
=\degr_{M/F}(\alpha_{M/F})\,
\degr_{M|F}(\beta_{M|F}^{i-1})
=\nu_{i-1}(M|F),\\
&B_i(x_F,x_F)
=-\degr_{M/F}(\beta_{M/F})\,
\degr_{M|F}(\beta_{M|F}^{i-1})=-\bigl(|\Gamma_{i+1}(F)|-1\bigr)\nu_{i-1}(M|F).
\end{aligned}
\end{equation*}

If $H\in\Gamma_{i+1}(F)$, then $H\setminus F$ is a flat of rank $1$ in the matroid $M/F$ of rank $2$.
Thus $\varnothing\subsetneq H\setminus F\subsetneq E\setminus F$ is a complete flag in $\mathcal{L}(M/F)$, so $\degr_{M/F}(x_{H\setminus F})=1$.
Since $\varphi_F(x_H)=x_{H\setminus F}\otimes1$, we have
\begin{equation*}
B_i(x_F,x_H)
=\degr_{M/F}(x_{H\setminus F})\,
\degr_{M|F}(\beta_{M|F}^{i-1})
=\nu_{i-1}(M|F).
\end{equation*}

Now let $H\in\F_{i+1}(M)$.
Since $M/H$ has rank $1$, we have $A^*(M/H)=\R$ and $\alpha_{M/H}=\beta_{M/H}=0$. 
By Proposition~\ref{prop:degree-coeff}, $\degr_{M/H}(1)=\nu_0(M/H)=1$.
By \eqref{eq:pullback} and \eqref{eq:pullback-ab},
\begin{equation*}
\varphi_H(\alpha_M)=0,
\qquad
\varphi_H(x_H)=-1\otimes\alpha_{M|H}.
\end{equation*}
Thus $B_i(\alpha_M,x_H)=0$, and
\begin{equation*}
B_i(x_H,x_H)
=-\degr_{M/H}(1)\,
\degr_{M|H}(\alpha_{M|H}\beta_{M|H}^{i-1})=-\nu_{i-1}(M|H),
\end{equation*}
where the last equality follows from Proposition~\ref{prop:degree-coeff}, since $M|H$ has rank $i+1$.
\end{proof}

We now prove our main theorem.

\begin{proof}[Proof of Theorem~\ref{thm:weighted-incidence}]
Let $M$ be a loopless matroid of rank $r$, and fix $1\le i\le r-1$.
Assign a real weight $a_F$ to each flat $F\in\F_i(M)$.
If $r=i+1$, then $\F_{i+1}(M)=\{E\}$ and $\Gamma_{i+1}(F)=\{E\}$ for every $F\in\F_i(M)$.
Both sides of the inequality in Theorem~\ref{thm:weighted-incidence} equal
\begin{equation*}
\frac{\left(\sum_{F\in\F_i(M)}a_F\right)^2}
{\nu_{i-1}(M)}.
\end{equation*}
Thus the theorem holds for $r=i+1$.

Suppose that $r\ge i+2$.
By Section~\ref{subsec:truncation},
$M$ and $T_{i+2}(M)$ have the same flats of ranks $i$ and $i+1$, and $T_{i+2}(M)|F=M|F$ for every such flat $F$.
Moreover, $\nu_{i-1}(T_{i+2}(M))=\nu_{i-1}(M)$.
The inequality is therefore unchanged by truncation, so we may assume that $r=i+2$.

Let $B_i$ be the symmetric bilinear map defined in \eqref{Bbeta}.
For every $F\in\F_i(M)$, the restriction $M|F$ is loopless of rank $i$, so $\nu_{i-1}(M|F)>0$.
Define
\begin{equation*}
v=\sum_{F\in\F_i(M)} \frac{a_F}{\nu_{i-1}(M|F)}x_F\in A^1(M).
\end{equation*}
By Proposition~\ref{prop:intersection},
\begin{equation}\label{eq:weighted-alpha}
B_i(\alpha_M,v)
=\sum_{F\in\F_i(M)}
\frac{a_F}{\nu_{i-1}(M|F)}B_i(\alpha_M,x_F)=\sum_{F\in\F_i(M)}a_F.
\end{equation}
For every $H\in\F_{i+1}(M)$, we have
\begin{equation}\label{eq:weighted-cross}
B_i(v,x_H)
=\sum_{F\in\F_i(M)}
\frac{a_F}{\nu_{i-1}(M|F)}B_i(x_F,x_H)=\sum_{F\in\Gamma_i(H)}a_F.
\end{equation}
Since $B_i(x_F,x_{F'})=0$ for distinct $F,F'\in\F_i(M)$, we also have
\begin{equation}\label{eq:weighted-square}
B_i(v,v)
=\sum_{F\in\F_i(M)}
\frac{a_F^2}{\nu_{i-1}(M|F)^2}B_i(x_F,x_F)=-\sum_{F\in\F_i(M)}
\frac{|\Gamma_{i+1}(F)|-1}
{\nu_{i-1}(M|F)}a_F^2.
\end{equation}

By Corollary~\ref{cor:onepositive}, $B_i$ has exactly one positive eigenvalue and $B_i(\alpha_M,\alpha_M)=\nu_{i-1}(M)>0$.
By Proposition~\ref{prop:intersection}, for every
$H\in\F_{i+1}(M)$, we have
\begin{equation*}
B_i(\alpha_M,x_H)=0,
\quad
B_i(x_H,x_H)=-\nu_{i-1}(M|H)<0.
\end{equation*}
For distinct $H,H'\in\F_{i+1}(M)$, we also have $B_i(x_H,x_{H'})=0$.
Applying Proposition~\ref{prop:elimination} with $a=\alpha_M$, $y_H=x_H$, and $d_H=\nu_{i-1}(M|H)$, we obtain
\begin{equation}\label{eq:weighted-schur}
\sum_{H\in\F_{i+1}(M)}
\frac{B_i(v,x_H)^2}{\nu_{i-1}(M|H)}
\le
\frac{B_i(\alpha_M,v)^2}{\nu_{i-1}(M)}
-B_i(v,v).
\end{equation}
Substituting \eqref{eq:weighted-alpha}--\eqref{eq:weighted-square} into \eqref{eq:weighted-schur}, we obtain the required inequality.
\end{proof}
\section{Proof of Conjecture~\ref{conj:splp}}
\label{sec:plp}
Let $M$ be a matroid of rank $r\ge3$ on $E$.
Recall that an element $e\in E$ is a \emph{loop} if $\rk_M(\{e\})=0$.
Two distinct elements $e,f\in E$ are \emph{parallel} if
$\rk_M(\{e\})=\rk_M(\{f\})=\rk_M(\{e,f\})=1$.
A matroid is \emph{simple} if it has no loops and no two distinct elements are parallel.
A \emph{parallel class} is a maximal set of nonloops in which every two distinct elements are parallel.
Let $\widetilde E\subseteq E$ contain no loops and exactly one element from each parallel class.
The restriction $\widetilde M:=M|\widetilde E$ is a \emph{simplification} of $M$.
The matroid $\widetilde M$ is simple, and $\mathcal{L}(\widetilde M)\cong\mathcal{L}(M)$; see~\cite[Section~1.7]{Oxl11}.
Thus $\widetilde M$ has rank $r$, and $W_i(\widetilde M)=W_i(M)$ for $0\le i\le r$.
The lattice isomorphism also preserves the number of points in each plane.

We use the following matroid formulation of the de Bruijn--Erd\H{o}s theorem.

\begin{thm}[{\cite{dBE48}}]
\label{thm:dBE48}
Let $M$ be a simple matroid of rank $r\ge3$.
Then $W_2(M)\ge W_1(M)$.
\end{thm}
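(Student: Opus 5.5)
The plan is the classical double-counting proof of de Bruijn and Erd\H{o}s, recast in matroid language. Write $v=W_1(M)$ for the number of points and $b=W_2(M)$ for the number of lines. For a point $p$, let $r_p$ be the number of lines containing $p$; for a line $L$, let $k_L$ be the number of points on $L$. Since $M$ is simple, the points are the singletons $\{e\}$. Every pair of distinct points spans a unique line $\operatorname{cl}(\{e,f\})$ of rank $2$, and two distinct lines meet in a flat of rank at most $1$, hence in at most one point.

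The first step is to establish the basic local inequality: if $p\notin L$ then $r_p\ge k_L$. For each point $q\in L$, take the line $\operatorname{cl}(p\cup q)$. If two points $q\ne q'$ of $L$ gave the same line $N$, then $N$ would contain $q$ and $q'$. Since $q,q'$ already span $L$, we would get $N=L$, which contains $p$, a contradiction. So $q\mapsto \operatorname{cl}(p\cup q)$ is injective into the lines through $p$.

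Second, I use rank $r\ge3$ to rule out degenerate cases. No single line contains every point, since the points span $E$ of rank $r\ge3$; hence $k_L<v$ for every line $L$. No point lies on every line: given $p$, choose $q\neq p$ and then a point $s\notin\operatorname{cl}(p\cup q)$, which exists because $r\ge3$. Then no line through $p$ can contain both $q$ and $s$, since such a line would contain three non-collinear points. Hence $\operatorname{cl}(q\cup s)$ is a line not containing $p$, and so $r_p<b$.

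Finally, suppose for contradiction that $b<v$. For every non-incident pair $(p,L)$, combining $k_L\le r_p$ with $b<v$ gives $b\,k_L<v\,r_p$. This is equivalent to $v(b-r_p)<b(v-k_L)$, and both sides are positive by the second step, so
\begin{equation*}
\frac{1}{v(b-r_p)}>\frac{1}{b(v-k_L)}.
\end{equation*}
Now sum over all non-incident pairs. Summing the left side first over the $b-r_p$ lines missing $p$ and then over $p$ gives $\sum_p 1/v=1$. Summing the right side first over the $v-k_L$ points off $L$ and then over $L$ gives $\sum_L 1/b=1$. The strict inequality yields $1>1$, a contradiction, so $W_2(M)\ge W_1(M)$. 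The only delicate point is the positivity of the denominators, which is exactly where the hypotheses of simplicity and rank at least $3$ are used; the rest is routine.
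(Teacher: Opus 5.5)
Your proof is correct and complete. The paper gives no argument for this statement: it quotes it from de Bruijn and Erd\H{o}s as a black box and uses it only for simple rank-$3$ matroids in Lemma~\ref{lem:rankthree}. Your last paragraph is the classical double-counting proof.

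What your proposal adds relative to the paper is the check, in your first two paragraphs, that the matroid translation is legitimate:
\begin{itemize}
\item simplicity makes the points exactly the elements and gives exactly one line through any two of them;
\item the fact that an intersection of flats is a flat makes two distinct lines share at most one point;
\item the hypothesis $r\ge3$ gives the two non-degeneracy facts $k_L<v$ and $r_p<b$. These make every denominator positive and the set of non-incident pairs nonempty, so the termwise strict inequality survives summation.
\end{itemize}
The paper's citation tacitly relies on this dictionary: the rank-$2$ flats of a simple matroid of rank at least $3$ form a linear space with at least two points per line, not all points collinear. Your version makes it explicit in a few lines, and inserting it would make the paper self-contained at this point.

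The only step you leave unstated is the positivity needed to pass from $k_L\le r_p$ and $b<v$ to $bk_L<vr_p$. This needs $k_L\ge2$ (or $r_p\ge1$), which is immediate from simplicity.
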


We first prove two inequalities needed for Conjecture~\ref{conj:splp}.

\begin{lem}\label{lem:pair}
Let $M$ be a simple matroid of rank at least $3$.
Then
\begin{equation}\label{eq:pair}
\sum_{L\in\F_2(M)}\frac1{|L|-1}
\ge
\frac{W_2(M)^2}{\binom{W_1(M)}2}.
\end{equation}
Equality holds if and only if every line contains exactly
two points.
\end{lem}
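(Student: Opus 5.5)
Write $n=W_1(M)$, $m=W_2(M)$, and for a line $L$ let $|L|$ denote its number of points. The plan is to count pairs of points and then apply the Cauchy--Schwarz inequality. In a simple matroid, any two distinct points span exactly one line. Hence
\begin{equation*}
\sum_{L\in\F_2(M)}\binom{|L|}{2}=\binom{n}{2}.
\end{equation*}
Next, apply Cauchy--Schwarz in the form
\begin{equation*}
\Bigl(\sum_{L} 1\Bigr)^2\le \Bigl(\sum_L \frac{1}{|L|-1}\Bigr)\Bigl(\sum_L (|L|-1)\Bigr).
\end{equation*}
This gives
\begin{equation*}
\sum_L\frac1{|L|-1}\ge \frac{m^2}{\sum_L(|L|-1)}.
\end{equation*}
It therefore suffices to show $\sum_L(|L|-1)\le\binom n2$. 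Every line has $|L|\ge2$, so $|L|-1\le\binom{|L|}2$, with equality exactly when $|L|=2$. Summing over lines and using the pair count yields $\sum_L(|L|-1)\le\binom n2$.

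A slightly sharper alternative pairs Cauchy--Schwarz with the weights $\binom{|L|}{2}$. However, the chain above already gives exactly the stated bound, so we use it.

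For the equality case, suppose equality holds in \eqref{eq:pair}. Then both inequalities in the chain must be tight. In particular, $|L|-1=\binom{|L|}{2}$ for every line, which forces $|L|=2$ for all $L$. Conversely, suppose every line has exactly two points. Then both sides equal $m$, since $m=\binom n2$ in that case. Indeed, Cauchy--Schwarz is tight because all $|L|-1$ are equal to $1$, and the second inequality is tight termwise.

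The main subtlety is making sure the equality characterization is stated precisely, namely that tightness of the second inequality alone already forces all lines to be two-point lines. The rank $\ge3$ hypothesis is only needed to guarantee that lines are proper flats and that $n\ge 3$, so that $\binom n2>0$. The de Bruijn--Erd\H{o}s theorem (Theorem~\ref{thm:dBE48}) is not needed here; it presumably enters in the later step combining this lemma with Theorem~\ref{thm:weighted-incidence}.
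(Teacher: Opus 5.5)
Your proposal is correct and matches the paper's proof essentially step for step. Both arguments use the pair count $\sum_{L}\binom{|L|}{2}=\binom{W_1(M)}{2}$, the termwise bound $|L|-1\le\binom{|L|}{2}$ (tight only when $|L|=2$), and Cauchy--Schwarz, and both read the equality case off the tightness of that termwise bound (which uses $\sum_L 1/(|L|-1)>0$, as is automatic).
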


\begin{proof}
Since $M$ is simple, we have $W_1(M)=|E|$.
Every pair of distinct elements has rank $2$ and is contained in a line.
Two distinct lines contain at most one common element.
Thus
\begin{equation*}
\sum_{L\in\F_2(M)}\binom{|L|}{2}
=\binom{W_1(M)}2.
\end{equation*}
For every $L\in\F_2(M)$, we have $|L|\ge2$, so $\binom{|L|}{2}-(|L|-1)=\frac{(|L|-1)(|L|-2)}2\ge0$.
Therefore
\begin{equation*}
\sum_{L\in\F_2(M)}(|L|-1)
\le
\sum_{L\in\F_2(M)}\binom{|L|}{2}
=
\binom{W_1(M)}2,
\end{equation*}
with equality if and only if $|L|=2$ for every line $L$.

By the Cauchy--Schwarz inequality, we have
\begin{equation*}
W_2(M)^2
\le
\left(\sum_{L\in\F_2(M)}(|L|-1)\right)
\left(\sum_{L\in\F_2(M)}\frac1{|L|-1}\right)\le
\binom{W_1(M)}2
\sum_{L\in\F_2(M)}\frac1{|L|-1}.
\end{equation*}
This proves \eqref{eq:pair}.
Equality holds in \eqref{eq:pair} if and only if every line contains exactly two points.
\end{proof}

We next prove an inequality for simple matroids of rank $3$.

\begin{lem}\label{lem:rankthree}
Let $Q$ be a simple matroid of rank $3$.
Then
\begin{equation}\label{eq:rankthree}
\frac{W_2(Q)^2}{W_1(Q)-1}
-
\sum_{L\in\F_2(Q)}\frac1{|L|-1}
\ge\frac32.
\end{equation}
Equality holds if and only if $W_1(Q)=3$.
\end{lem}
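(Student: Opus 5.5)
The plan is to reduce \eqref{eq:rankthree} to elementary counting, using Theorem~\ref{thm:dBE48} together with a case split on whether $Q$ has a line with at least three points. Write $n=W_1(Q)$ and $m=W_2(Q)$. Since $Q$ is simple of rank $3$, we have $n=|E|\ge3$. As in the proof of Lemma~\ref{lem:pair}, every line satisfies $|L|\ge2$, so each summand obeys $\frac1{|L|-1}\le1$, with equality exactly when $|L|=2$. Moreover
\begin{equation*}
\sum_{L\in\F_2(Q)}\binom{|L|}2=\binom n2 .
\end{equation*}

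\emph{Case 1: every line has exactly two points.} Then the displayed identity gives $m=\binom n2$, and $\sum_{L}\frac1{|L|-1}=m$. The left side of \eqref{eq:rankthree} becomes
\begin{equation*}
\frac{m^2}{n-1}-m=\binom n2\Bigl(\frac n2-1\Bigr)=\frac{n(n-1)(n-2)}4 .
\end{equation*}
This expression is increasing in $n$ and equals $\frac32$ at $n=3$. So in this case \eqref{eq:rankthree} holds, with equality iff $n=3$.

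\emph{Case 2: some line has at least three points.} That line contributes at most $\frac12$ to the sum, so $\sum_L\frac1{|L|-1}\le m-\frac12$. By Theorem~\ref{thm:dBE48}, $m\ge n$, and hence $m-n+1\ge1$. The left side of \eqref{eq:rankthree} is then at least
\begin{equation*}
\frac{m^2}{n-1}-m+\frac12=\frac{m(m-n+1)}{n-1}+\frac12\ge\frac{n}{n-1}+\frac12>\frac32 ,
\end{equation*}
so the inequality is strict in this case.

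For the equality characterization, combine the two cases: equality forces Case~1 with $n=3$. Conversely, if $n=3$, then no line can contain three points, because such a line would contain all of $E$ and force rank $2$. So $n=3$ puts us in Case~1, where equality holds.

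I expect no real obstacle. The only delicate point is that the crude bound $\sum_L\frac1{|L|-1}\le m$, combined with $m\ge n$, yields only $\frac{n}{n-1}$, which falls short of $\frac32$. This is exactly why the proof must split off the all-two-point case and extract the extra $\frac12$ from a longer line in the other case.
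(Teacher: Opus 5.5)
Your proposal is correct and follows the paper's proof: the same split on whether some line has at least three points, the bound $\sum_L\frac1{|L|-1}\le W_2(Q)-\frac12$ combined with Theorem~\ref{thm:dBE48} in that case, and the direct computation $\frac{n(n-1)(n-2)}{4}=\frac32\binom n3$ when every line has two points. You also spell out why $W_1(Q)=3$ excludes a three-point line, a step the paper uses in its equality characterization without stating it.
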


\begin{proof}
Since $Q$ has rank $3$, we have $W_1(Q)\ge3$. 
We consider two cases.

\textbf{Case 1.} There exists $L_0\in\F_2(Q)$ with $|L_0|\ge3$.
Then $\frac{1}{|L_0|-1}\le\frac12$.
For every other line $L$, we have $\frac{1}{|L|-1}\le1$.
Therefore
\begin{equation*}
\sum_{L\in\F_2(Q)}\frac1{|L|-1}
=
\frac1{|L_0|-1}
+
\sum_{\substack{L\in\F_2(Q)\\L\ne L_0}}
\frac1{|L|-1}\le
W_2(Q)-\frac12.
\end{equation*}
By Theorem~\ref{thm:dBE48}, we have $W_2(Q)\ge W_1(Q)$.
Hence
\begin{equation*}
\frac{W_2(Q)^2}{W_1(Q)-1}
-
\sum_{L\in\F_2(Q)}\frac1{|L|-1}
\ge
\frac{W_2(Q)^2}{W_1(Q)-1}-W_2(Q)+\frac12\ge
\frac1{W_1(Q)-1}+\frac32
>\frac32.
\end{equation*}
Thus \eqref{eq:rankthree} is strict whenever a line contains at least three points.

\textbf{Case 2.} 
All lines contain exactly two points.
Then $W_2(Q)=\binom{W_1(Q)}2$ and $\sum_{L\in\F_2(Q)}\frac1{|L|-1}=W_2(Q)$.
Therefore
\begin{equation*}
\frac{W_2(Q)^2}{W_1(Q)-1}
-
\sum_{L\in\F_2(Q)}\frac1{|L|-1}
=
\frac{\binom{W_1(Q)}2^2}{W_1(Q)-1}
-\binom{W_1(Q)}2=
\frac32\binom{W_1(Q)}3
\ge\frac32.
\end{equation*}
Equality holds in \eqref{eq:rankthree} if and only if $W_1(Q)=3$.
\end{proof}

We now prove Conjecture~\ref{conj:splp}.

\begin{proof}[Proof of Conjecture~\ref{conj:splp}]
Let $M$ be a matroid of rank $r\ge3$ on $E$.
Since simplification preserves the Whitney numbers and the number of points in each plane, we may assume that $M$ is simple.

For $L\in\F_2(M)$ and $H\in\F_3(M)$, the restrictions $M|L$ and $M|H$ are simple.
By \eqref{eq:w-nu},
\begin{equation*}
\nu_1(M)=W_1(M)-1,\qquad
\nu_1(M|L)=|L|-1,\qquad
\nu_1(M|H)=|H|-1.
\end{equation*}
Taking $i=2$ and $a_L=1$ for every $L\in\F_2(M)$ in Theorem~\ref{thm:weighted-incidence}, we obtain
\begin{equation}\label{eq:plp-first}
\sum_{H\in\F_3(M)}
\frac{|\Gamma_2(H)|^2}{|H|-1}
\le
\frac{W_2(M)^2}{W_1(M)-1}
+
\sum_{L\in\F_2(M)}
\frac{|\Gamma_3(L)|-1}{|L|-1}.
\end{equation}

Since
\begin{equation*}
\sum_{L\in\F_2(M)}
\frac{|\Gamma_3(L)|}{|L|-1}
=
\sum_{H\in\F_3(M)}
\sum_{L\in\Gamma_2(H)}\frac1{|L|-1},
\end{equation*}
we can rewrite \eqref{eq:plp-first} as
\begin{equation}\label{eq:plp-localization}
\sum_{H\in\F_3(M)}
\left(
\frac{|\Gamma_2(H)|^2}{|H|-1}
-
\sum_{L\in\Gamma_2(H)}\frac1{|L|-1}
\right)
\le
\frac{W_2(M)^2}{W_1(M)-1}
-
\sum_{L\in\F_2(M)}\frac1{|L|-1}.
\end{equation}

For every $H\in\F_3(M)$, the restriction $M|H$
is simple of rank $3$, with $W_1(M|H)=|H|$ and
$\F_2(M|H)=\Gamma_2(H)$.
By Lemma~\ref{lem:rankthree},
\begin{equation*}
\frac{|\Gamma_2(H)|^2}{|H|-1}
-
\sum_{L\in\Gamma_2(H)}\frac1{|L|-1}
\ge\frac32.
\end{equation*}
Summing over $H\in\F_3(M)$ and using \eqref{eq:plp-localization} and Lemma~\ref{lem:pair}, we obtain
\begin{align}
\frac32 W_3(M)
&\le
\sum_{H\in\F_3(M)}
\left(
\frac{|\Gamma_2(H)|^2}{|H|-1}
-
\sum_{L\in\Gamma_2(H)}\frac1{|L|-1}
\right)
\label{eq:plp-plane-bound}\\
&\le
\frac{W_2(M)^2}{W_1(M)-1}
-
\sum_{L\in\F_2(M)}\frac1{|L|-1}
\notag\\
&\le
\frac{W_2(M)^2}{W_1(M)-1}
-
\frac{W_2(M)^2}{\binom{W_1(M)}2}
\notag\\
&=
\frac{W_1(M)-2}{W_1(M)(W_1(M)-1)}W_2(M)^2.
\notag
\end{align}
Since $W_1(M)\ge3$, we have
\begin{equation*}
W_2(M)^2
\ge
\frac{3(W_1(M)-1)}{2(W_1(M)-2)}
W_1(M)W_3(M).
\end{equation*}
This proves the required inequality.

Suppose that the inequality in Conjecture~\ref{conj:splp} holds with equality.
Then equality holds in \eqref{eq:plp-plane-bound}.
By the equality criterion in Lemma~\ref{lem:rankthree}, every plane contains exactly three points.

Conversely, suppose that every plane contains exactly three points.
Since $M$ is simple, it is easy to see that every two-element
subset is a line and every three-element subset is a plane.
Therefore
\begin{equation*}
W_2(M)=\binom{W_1(M)}2,
\qquad
W_3(M)=\binom{W_1(M)}3.
\end{equation*}
Hence
\begin{equation*}
W_2(M)^2
=\binom{W_1(M)}2^2
=\frac{3(W_1(M)-1)}{2(W_1(M)-2)}
W_1(M)W_3(M).
\end{equation*}
This proves the equality criterion.
\end{proof}
\section{Proof of Conjecture~\ref{conj:brylawski}}
\label{sec:coefficients}
Let $M$ be a loopless matroid of rank $r\ge1$ on $E$.
A set $I\subseteq E$ is \emph{independent} if $\rk_M(I)=|I|$, and is \emph{dependent} otherwise.
A subset $C\subseteq E$ is a \emph{circuit} if $C$ is dependent and every proper subset of $C$ is independent.
Fix a total order on $E$, and denote its least element by $0$.
A \emph{broken circuit} is a set $C\setminus\{\min C\}$, where $C$ is a circuit of $M$.
The \emph{reduced broken-circuit complex} of $M$ is
\begin{equation*}
\rBC(M)
=
\{S\subseteq E\setminus\{0\}:
S\text{ contains no broken circuit}\}.
\end{equation*}
The family $\rBC(M)$ is closed under taking subsets
and hence is a simplicial complex.
The members of $\rBC(M)$ are called \emph{faces}.
A face is called a \emph{facet} if it is not properly contained in any other face.

We recall the following results on broken-circuit complexes.

\begin{thm}[{\cite{Bry77}}]\label{thm:bc}
Let $M$ be a loopless matroid of rank $r\ge1$, with a fixed total order on $E$.
Every facet of $\rBC(M)$ has $r-1$ elements, and
\begin{equation*}
\nu_i(M)
=
|\{S\in\rBC(M):|S|=i\}|
\quad\text{for all }0\le i\le r-1.
\end{equation*}
\end{thm}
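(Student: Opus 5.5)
The plan is to derive both assertions from Whitney's broken-circuit theorem for the \emph{full} broken-circuit complex
\begin{equation*}
\operatorname{BC}(M)=\{S\subseteq E: S\text{ contains no broken circuit}\}.
\end{equation*}
Whitney's theorem states that $w_i(M)=|\{S\in\operatorname{BC}(M):|S|=i\}|$. I will take this as the standard input, proved by M\"obius inversion or deletion--contraction. I will then compare $\operatorname{BC}(M)$ with $\rBC(M)$ and read off $\nu_i$ from the relation \eqref{eq:w-nu}.

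\textbf{Step 1 (cone structure).} No broken circuit contains the least element $0$. Indeed, if $0\in C$ for a circuit $C$, then $0=\min C$ is exactly the element removed. Hence a set $S$ contains no broken circuit if and only if $S\cup\{0\}$ contains none. It follows that
\begin{equation*}
\operatorname{BC}(M)=\rBC(M)\cup\{S\cup\{0\}:S\in\rBC(M)\},
\end{equation*}
that is, $\operatorname{BC}(M)$ is the cone over $\rBC(M)$ with apex $0$.

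\textbf{Step 2 (face counts).} Write $f_i$ for the number of $i$-element faces of $\rBC(M)$, with $f_{-1}=0$. Step~1 and Whitney's theorem give $w_i(M)=f_i+f_{i-1}$ for all $i$. The relation \eqref{eq:w-nu}, namely $w_i=\nu_i+\nu_{i-1}$ with $\nu_{-1}=0$, determines $\nu_0,\nu_1,\ldots$ recursively from the $w_i$. Induction on $i$ then yields $\nu_i(M)=f_i$ for $0\le i\le r-1$.

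\textbf{Step 3 (purity).} I expect this step to be the only one needing a genuine argument. Every member of $\operatorname{BC}(M)$ is independent, since any circuit contains its own broken circuit. So it suffices to show that a non-spanning $S\in\operatorname{BC}(M)$ can be enlarged within $\operatorname{BC}(M)$.

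Let $e$ be the least element of $E\setminus\operatorname{cl}(S)$. Suppose $S\cup\{e\}$ contained a broken circuit $C\setminus\{c\}$ with $c=\min C$. This broken circuit must contain $e$, so $c<e$ and $C\subseteq S\cup\{e,c\}$. By the minimality of $e$, we have $c\in\operatorname{cl}(S)$. Then $e\in\operatorname{cl}(C\setminus\{e\})\subseteq\operatorname{cl}(S\cup\{c\})=\operatorname{cl}(S)$, which is a contradiction.

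Hence every facet of $\operatorname{BC}(M)$ is a basis and has $r$ elements. By the cone structure, every facet contains $0$, so removing $0$ shows that each facet of $\rBC(M)$ has exactly $r-1$ elements. This completes the plan.
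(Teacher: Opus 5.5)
Your proof is correct. The paper does not prove this statement; it quotes it from Brylawski \cite{Bry77}. Your derivation is the classical one: Whitney's (Rota's) broken-circuit theorem for the full complex $\operatorname{BC}(M)$, the fact that $\operatorname{BC}(M)$ is the cone over $\rBC(M)$ with apex $0$, and a greedy extension argument for purity. It makes the cited result self-contained modulo Whitney's theorem.

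Your purity step is sound. With $e=\min(E\setminus\operatorname{cl}(S))$, a broken circuit $C\setminus\{c\}\subseteq S\cup\{e\}$ forces $e\in C$ and $c<e$. Hence $c\in\operatorname{cl}(S)$ and $e\in\operatorname{cl}(S\cup\{c\})=\operatorname{cl}(S)$, which is a contradiction.

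Two small points deserve a line each in a write-up. First, in the last step, justify that $T\mapsto T\cup\{0\}$ sends facets of $\rBC(M)$ to facets of $\operatorname{BC}(M)$. If $T\cup\{0\}\subsetneq T'\in\operatorname{BC}(M)$, then $T\subsetneq T'\setminus\{0\}\in\rBC(M)$. Second, looplessness is exactly what ensures that $\varnothing$ is not a broken circuit. This gives $\varnothing\in\rBC(M)$, so the complex is nonempty and has facets.

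As a side remark, Whitney's theorem together with your Steps 1 and 2 gives $\chi_M(t)=(t-1)\sum_i(-1)^i f_i t^{r-1-i}$ directly. This also reproves $\chi_M(1)=0$ and identifies $\overline{\chi}_M$ without invoking \eqref{eq:w-nu} separately.
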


We next prove two inequalities.

\begin{lem}\label{lem:facecounts}
Let $i\ge1$.
For every loopless matroid $Q$ of rank $i+1$, we have
\begin{equation}\label{eq:local-facecount}
\nu_{i-1}(Q)\le i\,\nu_i(Q).
\end{equation}
For every loopless matroid $M$ of rank $r$ on $n$ elements and $1\le i\le r-2$, we have
\begin{equation}\label{eq:global-facecount}
(i+1)\nu_{i+1}(M)\le(n-i-1)\nu_i(M).
\end{equation}
Equality holds in \eqref{eq:global-facecount} if and only if every circuit of $M$ has at least $i+3$ elements.
\end{lem}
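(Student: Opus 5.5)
Both inequalities will come from double counting in the reduced broken-circuit complex, using Theorem~\ref{thm:bc}: $\rBC$ is pure of dimension $r-2$ (facets have $r-1$ elements) and its face numbers are the $\nu_i$.

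\textbf{Local inequality.} Let $Q$ have rank $i+1$, so $\rBC(Q)$ is pure with facets of size $i$. Count pairs $(S,T)$ with $S\subsetneq T$, $|S|=i-1$, $|T|=i$, and $S,T\in\rBC(Q)$. Each $T$ of size $i$ contains exactly $i$ such $S$, so the count is $i\,\nu_i(Q)$. By purity, each $(i-1)$-face $S$ lies in at least one facet, so the count is at least $\nu_{i-1}(Q)$. Together these give \eqref{eq:local-facecount}.

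\textbf{Global inequality.} Count pairs $(S,T)$ with $S\subsetneq T$, $|S|=i$, $|T|=i+1$, and $T\in\rBC(M)$. Each $T$ has $i+1$ subsets of size $i$, each of which is a face, so the count is $(i+1)\nu_{i+1}(M)$. For a fixed $i$-face $S$, the extensions are $T=S\cup\{e\}$ with $e\in E\setminus(S\cup\{0\})$, and there are $n-1-i$ candidates. Hence the count is at most $(n-i-1)\nu_i(M)$, which proves \eqref{eq:global-facecount}.

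\textbf{Equality in the global inequality.} Equality holds iff every $(i+1)$-subset $T\subseteq E\setminus\{0\}$ containing an $i$-face is itself a face.

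If every circuit has at least $i+3$ elements, every broken circuit has at least $i+2$ elements. Then no $(i+1)$-set contains a broken circuit, all $(i+1)$-subsets of $E\setminus\{0\}$ are faces, and equality holds.

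Conversely, suppose some circuit $C$ has $|C|\le i+2$. I expect this direction to be the main obstacle. Let $D=C\setminus\{\min C\}$, so $|D|\le i+1$. Since $C$ is a circuit, $D$ is independent, so $\rk_M(D)=|D|$. Also $i+1\le r-1$, so I aim to extend $D$ to an independent $(i+1)$-set $T$ by adding elements of $E\setminus\{0\}$ that are not parallel to $0$, keeping $T\subseteq E\setminus\{0\}$. Then $T$ contains the broken circuit $D$, so it is not a face. It remains to find an $i$-face inside $T$.

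The plan is to choose the extension and the circuit carefully. Pick $C$ of minimal size, then lexicographically extremal. Add the largest available elements. Deleting one element of $D$ should then leave an $i$-set containing no broken circuit, since any broken circuit inside it would come from a smaller circuit, or from one that violates the extremal choice.

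If this direct choice fails, I will fall back on two alternatives. One is to use the fact that a broken circuit $D'$ contained in $T\setminus\{d\}$ gives a circuit with $|D'|\le |D|$. The other is to reorder $E$, which is legitimate because the $\nu_i$ do not depend on the order, so that $\min C$ becomes $0$. Then $D$ sits in $E\setminus\{0\}$ with many faces around it, and the verification reduces to checking a single $(i+1)$-set.
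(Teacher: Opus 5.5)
Your double counting for \eqref{eq:local-facecount} and \eqref{eq:global-facecount} is the paper's argument. So are your reformulation of equality (every $i$-face $S$ extends by every $t\in E\setminus(S\cup\{0\})$) and your proof that ``all circuits have at least $i+3$ elements'' implies equality.

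The gap is the other implication, which you only outline. You fix $T\supseteq D$ first by adjoining elements, and then hope that some $T\setminus\{d\}$ with $d\in D$ is a face. But when $|D|<i+1$, the set $T\setminus\{d\}$ has $i\ge|D|$ elements. It may then contain another broken circuit $D'$ with $|D|\le|D'|\le i$, coming from a circuit $D'\cup\{m\}$ with $m\notin T$. Such a circuit is not smaller than $C$. Nothing in the unspecified lexicographic choice, or in ``add the largest available elements,'' rules it out. So the key claim is unproved, and the fallbacks are not carried out either.

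The paper does not construct a witness. It uses an exchange argument. Assuming equality, each $i$-face $S$ gives $i$-faces $(S\setminus\{s\})\cup\{t\}$ for all $s\in S$ and $t\in E\setminus(S\cup\{0\})$. An $i$-face exists because facets have $r-1\ge i+1$ elements. Starting from one and making single swaps, you reach every $i$-subset of $E\setminus\{0\}$, so all of them are faces. By the extension property, every $(i+1)$-subset of $E\setminus\{0\}$ is then a face. Since every broken circuit lies in $E\setminus\{0\}$, every broken circuit has at least $i+2$ elements, so every circuit has at least $i+3$.

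Your own approach can also be repaired, by building from the face side using purity instead of enlarging $D$. Take $|C|$ minimal. Then $D\setminus\{d\}$ has fewer elements than every broken circuit, so it is a face. A facet containing it has $r-1\ge i+1$ elements and cannot contain $d$, since it would then contain $D$. Since $|D\setminus\{d\}|\le i$, that facet contains an $i$-face $S\supseteq D\setminus\{d\}$. Then $S\cup\{d\}\supseteq D$ is not a face, so \eqref{eq:global-facecount} is strict.
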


\begin{proof}
Fix a total order on the ground set of $Q$.
By Theorem~\ref{thm:bc}, every facet of $\rBC(Q)$ has $i$ elements.
Let
\begin{equation*}
\mathcal P=
\{(S,T)\in\rBC(Q)\times\rBC(Q):
S\subseteq T,\ |S|=i-1,\ |T|=i\}.
\end{equation*}
Every face $T$ with $|T|=i$ has exactly $i$ subsets
of size $i-1$.
Since $\rBC(Q)$ is closed under taking subsets, all these subsets are faces.
By Theorem~\ref{thm:bc}, the number of faces $T$ with $|T|=i$ is $\nu_i(Q)$.
Hence $|\mathcal P|=i\,\nu_i(Q)$.
Every face $S$ with $|S|=i-1$ is contained in a facet $T$
with $|T|=i$.
Hence each such $S$ occurs in at least one pair in $\mathcal P$.
By Theorem~\ref{thm:bc}, we have
\begin{equation*}
\nu_{i-1}(Q)\le|\mathcal P|=i\,\nu_i(Q).
\end{equation*}
This proves \eqref{eq:local-facecount}.

Let $E$ be the ground set of $M$, with $|E|=n$.
Fix a total order on $E$ with least element $0$.
Let
\begin{equation*}
\mathcal P'=
\{(S,T)\in\rBC(M)\times\rBC(M):
S\subseteq T,\ |S|=i,\ |T|=i+1\}.
\end{equation*}
Every face $T$ with $|T|=i+1$ contains exactly $i+1$
faces of size $i$.
By Theorem~\ref{thm:bc}, we have
$|\mathcal P'|=(i+1)\nu_{i+1}(M)$.

If $(S,T)\in\mathcal P'$, then $T=S\cup\{t\}$ for some $t\in E\setminus(S\cup\{0\})$.
Since $|S|=i$ and $0\notin S$, we have $|E\setminus(S\cup\{0\})|=n-i-1$.
Some sets $S\cup\{t\}$ may contain a broken circuit.
Thus each face $S$ with $|S|=i$ is contained in at most
$n-i-1$ faces $T$ with $|T|=i+1$.
By Theorem~\ref{thm:bc}, we have
\begin{equation*}
(i+1)\nu_{i+1}(M)=|\mathcal P'|
\le(n-i-1)\nu_i(M).
\end{equation*}
This proves \eqref{eq:global-facecount}.

Suppose that equality holds in \eqref{eq:global-facecount}.
Then every face $S$ with $|S|=i$ is contained in exactly
$n-i-1$ faces with $i+1$ elements.
Thus
\begin{equation}\label{eq:face-extension}
S\cup\{t\}\in\rBC(M)
\quad \text{for all }
t\in E\setminus(S\cup\{0\}).
\end{equation}

Let $T\subseteq E\setminus\{0\}$ with $|T|=i$.
By Theorem~\ref{thm:bc}, every facet of $\rBC(M)$ has $r-1\ge i+1$ elements.
Since $\rBC(M)$ is closed under taking subsets, there exists a face $S$ with $|S|=i$.
If $S\ne T$, choose $s\in S\setminus T$ and
$t\in T\setminus S$.
By \eqref{eq:face-extension}, $S\cup\{t\}$ is a face.
Since $(S\setminus\{s\})\cup\{t\}\subseteq S\cup\{t\}$,
$(S\setminus\{s\})\cup\{t\}$ is also a face.
Replacing $S$ by $(S\setminus\{s\})\cup\{t\}$ preserves
$S\in\rBC(M)$ and $|S|=i$ while decreasing
$|S\setminus T|$ by one.
After finitely many steps, $S=T\in\rBC(M)$.
Hence every $i$-element subset of $E\setminus\{0\}$
is a face.
By \eqref{eq:face-extension}, every $(i+1)$-element
subset is also a face.
Since $\rBC(M)$ is closed under taking subsets,
every subset of $E\setminus\{0\}$ with at most $i+1$
elements is a face.
For every circuit $C$ of $M$, we have
\begin{equation*}
C\setminus\{\min C\}\subseteq E\setminus\{0\},
\qquad
C\setminus\{\min C\}\notin\rBC(M).
\end{equation*}
Therefore $|C|-1\ge i+2$, so $|C|\ge i+3$.

Conversely, suppose that every circuit of $M$
has at least $i+3$ elements.
Then every broken circuit has at least $i+2$ elements,
so every subset of $E\setminus\{0\}$ with at most $i+1$
elements is a face.
By Theorem~\ref{thm:bc}, we have $\nu_j(M)=\binom{n-1}{j}$ for $0\le j\le i+1$.
Hence equality holds in \eqref{eq:global-facecount}.
\end{proof}

We apply Theorem~\ref{thm:weighted-incidence} with the weights $a_F=|\mu(\varnothing,F)|$ to obtain an inequality for the reduced characteristic coefficients.

\begin{prop}\label{prop:local-coefficients}
Let $M$ be a loopless matroid of rank $r$, and let $1\le i\le r-2$.
Then
\begin{equation}\label{eq:local-coefficients}
\frac{\nu_i(M)^2}{\nu_{i-1}(M)}
\ge
\nu_{i+1}(M)
+
\sum_{H\in\F_{i+1}(M)}
\frac{\nu_i(M|H)^2}{\nu_{i-1}(M|H)}.
\end{equation}
\end{prop}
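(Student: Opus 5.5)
We will apply Theorem~\ref{thm:weighted-incidence} at level $i$ with weights $a_F=|\mu(\varnothing,F)|=w_i(M|F)$ for $F\in\F_i(M)$. Then each term is evaluated using \eqref{eq:w-nu} applied to restrictions. The basic identity needed is the following. For any loopless matroid $N$ of rank $k\ge1$, we have $\sum_{F\in\F_k(N)}|\mu(\varnothing,F)|=|\mu(\varnothing,E_N)|=w_k(N)$. More generally, the coefficient expansion gives $w_j(N)=\sum_{F\in\F_j(N)}|\mu(\varnothing,F)|$, using the sign alternation of the Möbius function. Since $M|F$ has rank $i$, we get $w_i(M|F)=\nu_{i-1}(M|F)$, because $\nu_i(M|F)=0$.

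The three sums evaluate as follows.
\begin{itemize}
\item \textbf{Right-hand total.} We have $\sum_{F\in\F_i(M)}a_F=w_i(M)=\nu_i(M)+\nu_{i-1}(M)$.
\item \textbf{Left-hand inner sums.} For $H\in\F_{i+1}(M)$, we have $\sum_{F\in\Gamma_i(H)}a_F=w_i(M|H)=\nu_i(M|H)+\nu_{i-1}(M|H)$.
\item \textbf{Diagonal term.} Since $a_F=\nu_{i-1}(M|F)$, it equals $\sum_{F}(|\Gamma_{i+1}(F)|-1)\nu_{i-1}(M|F)$. Double counting the pairs $F\subset H$ turns this into $\sum_{H\in\F_{i+1}(M)}w_i(M|H)-w_i(M)$.
\end{itemize}

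Substituting, the inequality of Theorem~\ref{thm:weighted-incidence} becomes
\begin{equation*}
\sum_{H}\frac{(\nu_i(M|H)+\nu_{i-1}(M|H))^2}{\nu_{i-1}(M|H)}
\le\frac{(\nu_i(M)+\nu_{i-1}(M))^2}{\nu_{i-1}(M)}+\sum_H w_i(M|H)-w_i(M).
\end{equation*}
Expand each square as $\nu_i^2/\nu_{i-1}+2\nu_i+\nu_{i-1}$. On the left this gives $\sum_H \nu_i(M|H)^2/\nu_{i-1}(M|H)+\sum_H\bigl(w_i(M|H)+\nu_i(M|H)\bigr)$. On the right the first term gives $\nu_i(M)^2/\nu_{i-1}(M)+w_i(M)+\nu_i(M)$. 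Cancelling $\sum_H w_i(M|H)$ and $w_i(M)$ leaves
\begin{equation*}
\frac{\nu_i(M)^2}{\nu_{i-1}(M)}\ge\sum_H\frac{\nu_i(M|H)^2}{\nu_{i-1}(M|H)}+\sum_{H\in\F_{i+1}(M)}\nu_i(M|H)-\nu_i(M).
\end{equation*}

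It remains to identify $\sum_H\nu_i(M|H)-\nu_i(M)$ with $\nu_{i+1}(M)$. For $H$ of rank $i+1$, we have $\nu_i(M|H)=w_{i+1}(M|H)=|\mu(\varnothing,H)|$. Hence $\sum_H\nu_i(M|H)=w_{i+1}(M)=\nu_{i+1}(M)+\nu_i(M)$, which gives exactly \eqref{eq:local-coefficients}. Here $i+1\le r-1$, so $\nu_{i+1}(M)$ is a genuine coefficient.

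The main point requiring care is justifying $w_j(N)=\sum_{F\in\F_j(N)}|\mu(\varnothing,F)|$. This follows from the definition of $\chi$ once one knows $(-1)^{\rk F}\mu(\varnothing,F)\ge0$ (Rota's sign theorem). Alternatively, it is immediate from \eqref{eq:w-nu} and positivity of $\nu$: apply \eqref{eq:w-nu} to $N=M|F$ of rank $j$, which gives $|\mu(\varnothing,F)|=w_j(M|F)=\nu_{j-1}(M|F)>0$ with sign $(-1)^j$. The rest is bookkeeping.
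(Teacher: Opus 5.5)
Your proposal is correct and follows essentially the same route as the paper. You take the same weights $a_F=|\mu(\varnothing,F)|=\nu_{i-1}(M|F)$ in Theorem~\ref{thm:weighted-incidence}, use the same double count for the diagonal term, and finish with the same identity $\sum_{H}\nu_i(M|H)=w_{i+1}(M)=\nu_i(M)+\nu_{i+1}(M)$. You also write out explicitly the algebraic simplification that the paper leaves as ``simplifying''.
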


\begin{proof}
Let $F\in\F_i(M)$.
Since $\mathcal{L}(M|F)=[\varnothing,F]$, the constant term of $\chi_{M|F}(t)$ is $\mu(\varnothing,F)$.
The restriction $M|F$ has rank $i$, so the constant term of $\overline{\chi}_{M|F}(t)$ is $(-1)^{i-1}\nu_{i-1}(M|F)$.
Comparing constant terms in $\chi_{M|F}(t)=(t-1)\overline{\chi}_{M|F}(t)$, we obtain $\mu(\varnothing,F)=(-1)^i\nu_{i-1}(M|F)$.
Let
\begin{equation}\label{eq:mobius-weights}
a_F=|\mu(\varnothing,F)|=\nu_{i-1}(M|F)
\quad\text{ for all }F\in\F_i(M).
\end{equation}
For every $H\in\F_{i+1}(M)$, we have $\F_i(M|H)=\Gamma_i(H)$.
The intervals $[\varnothing,F]$ for $F\in\Gamma_i(H)$ are unchanged under restriction to $H$.
By the definition of the unsigned Whitney numbers of the first kind, we have
\begin{equation}\label{eq:mobius-sums}
\sum_{F\in\F_i(M)}a_F=w_i(M),
\qquad
\sum_{F\in\Gamma_i(H)}a_F=w_i(M|H)
\quad \text{for all }H\in\F_{i+1}(M).
\end{equation}
Moreover,
\begin{align}\label{eq:mobius-double}
\sum_{F\in\F_i(M)}
\bigl(|\Gamma_{i+1}(F)|-1\bigr)a_F
&=
\sum_{H\in\F_{i+1}(M)}
\sum_{F\in\Gamma_i(H)}a_F
-
\sum_{F\in\F_i(M)}a_F\\
&=
\sum_{H\in\F_{i+1}(M)}w_i(M|H)-w_i(M).
\notag
\end{align}
Applying Theorem~\ref{thm:weighted-incidence} with the weights in \eqref{eq:mobius-weights} and using \eqref{eq:mobius-sums} and \eqref{eq:mobius-double}, we obtain
\begin{equation}\label{eq:mobius-incidence}
\sum_{H\in\F_{i+1}(M)}
\frac{w_i(M|H)^2}{\nu_{i-1}(M|H)}
\le
\frac{w_i(M)^2}{\nu_{i-1}(M)}
+
\sum_{H\in\F_{i+1}(M)}w_i(M|H)-w_i(M).
\end{equation}

By \eqref{eq:w-nu}, we have
\begin{equation}\label{eq:mobius-w-nu}
w_i(M)=\nu_i(M)+\nu_{i-1}(M),
\qquad
w_i(M|H)=\nu_i(M|H)+\nu_{i-1}(M|H).
\end{equation}
For every $H\in\F_{i+1}(M)$, comparing constant terms in $\chi_{M|H}(t)=(t-1)\overline{\chi}_{M|H}(t)$, we have $\nu_i(M|H)=|\mu(\varnothing,H)|$.
Therefore
\begin{equation}\label{eq:top-local-sum}
\sum_{H\in\F_{i+1}(M)}\nu_i(M|H)
=
w_{i+1}(M)
=
\nu_i(M)+\nu_{i+1}(M).
\end{equation}
Substituting \eqref{eq:mobius-w-nu} and \eqref{eq:top-local-sum} into \eqref{eq:mobius-incidence} and simplifying, we prove \eqref{eq:local-coefficients}.
\end{proof}

We next prove the following inequalities for the reduced characteristic coefficients.

\begin{thm}\label{thm:reduced-ulc}
Let $M$ be a loopless matroid of rank $r\ge1$ on $n$ elements.
For $1\le i\le r-2$, we have
\begin{equation}\label{eq:reduced-ulc}
\nu_i(M)^2
\ge
\frac{(i+1)(n-i)}{i(n-i-1)}
\nu_{i-1}(M)\nu_{i+1}(M).
\end{equation}
Equality holds in \eqref{eq:reduced-ulc} if and only if every circuit of $M$ has at least $i+3$ elements.
\end{thm}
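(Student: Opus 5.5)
Starting from Proposition~\ref{prop:local-coefficients}, I would bound the local sum over $H\in\F_{i+1}(M)$ from below using the first face-count inequality of Lemma~\ref{lem:facecounts}, and then the term $\nu_{i+1}(M)$ using the second one.

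\textbf{Step 1: the local terms.} For $H\in\F_{i+1}(M)$, the restriction $M|H$ is loopless of rank $i+1$. Inequality \eqref{eq:local-facecount} gives $\nu_{i-1}(M|H)\le i\,\nu_i(M|H)$, hence
\begin{equation*}
\frac{\nu_i(M|H)^2}{\nu_{i-1}(M|H)}\ge\frac1i\,\nu_i(M|H).
\end{equation*}
Summing over $H$ and using \eqref{eq:top-local-sum}, the sum is at least $\frac1i\bigl(\nu_i(M)+\nu_{i+1}(M)\bigr)$. Then \eqref{eq:local-coefficients} yields
\begin{equation*}
\frac{\nu_i(M)^2}{\nu_{i-1}(M)}\ge\frac{i+1}{i}\,\nu_{i+1}(M)+\frac1i\,\nu_i(M).
\end{equation*}

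\textbf{Step 2: absorb the $\nu_i$ term.} By \eqref{eq:global-facecount}, $\nu_i(M)\ge\frac{(i+1)\nu_{i+1}(M)}{n-i-1}$. Substituting,
\begin{equation*}
\frac{\nu_i(M)^2}{\nu_{i-1}(M)}\ge\frac{i+1}{i}\,\nu_{i+1}(M)\Bigl(1+\frac1{n-i-1}\Bigr)=\frac{(i+1)(n-i)}{i(n-i-1)}\,\nu_{i+1}(M),
\end{equation*}
which is \eqref{eq:reduced-ulc}. Here $n-i-1>0$, since $i+1\le r-1<n$.

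\textbf{Step 3: equality.} Equality in \eqref{eq:reduced-ulc} forces equality in \eqref{eq:global-facecount}, since $\nu_{i+1}(M)>0$. By Lemma~\ref{lem:facecounts}, every circuit then has at least $i+3$ elements. For the converse, assume every circuit has at least $i+3$ elements. The proof of Lemma~\ref{lem:facecounts} gives $\nu_j(M)=\binom{n-1}{j}$ for $j\le i+1$, and a direct binomial computation confirms equality:
\begin{equation*}
\binom{n-1}{i}^2=\frac{(i+1)(n-i)}{i(n-i-1)}\binom{n-1}{i-1}\binom{n-1}{i+1}.
\end{equation*}
This check does not need the chain of inequalities to be tight, so the converse is immediate.

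The main obstacle is the forward equality direction. The key point is only that each inequality in the chain must be tight, and tightness of \eqref{eq:global-facecount} alone already characterizes the circuit condition. I therefore expect no real difficulty, provided the positivity of $\nu_{i+1}(M)$ from the broken-circuit theorem is cited.
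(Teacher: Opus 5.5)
Your proposal is correct and follows the paper's proof step for step. You bound the local terms via \eqref{eq:local-facecount} and \eqref{eq:top-local-sum}, absorb $\nu_i(M)$ via \eqref{eq:global-facecount}, read off equality from tightness of \eqref{eq:global-facecount}, and verify the converse by the binomial identity. One small correction: the positivity you actually need is $\nu_{i-1}(M)>0$, to pass between the inequality with $\nu_{i-1}(M)$ in the denominator and \eqref{eq:reduced-ulc}; the forward equality step does not need $\nu_{i+1}(M)>0$.
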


\begin{proof}
Fix $1\le i\le r-2$.
For every $H\in\F_{i+1}(M)$, the restriction $M|H$ is loopless of rank $i+1$.
By \eqref{eq:local-facecount},
\begin{equation}\label{eq:local-ratio-bound}
\frac{\nu_i(M|H)^2}{\nu_{i-1}(M|H)}
\ge\frac{\nu_i(M|H)}i.
\end{equation}
By \eqref{eq:local-coefficients} and \eqref{eq:local-ratio-bound}, we have
\begin{equation}\label{eq:reduced-intermediate}
\begin{aligned}
\frac{\nu_i(M)^2}{\nu_{i-1}(M)}
&\ge
\nu_{i+1}(M)
+
\sum_{H\in\F_{i+1}(M)}
\frac{\nu_i(M|H)^2}{\nu_{i-1}(M|H)}\\
&\ge
\nu_{i+1}(M)
+\frac1i\sum_{H\in\F_{i+1}(M)}\nu_i(M|H)\\
&=
\frac{i+1}{i}\nu_{i+1}(M)+\frac1i\nu_i(M),
\end{aligned}
\end{equation}
where the last equality follows from \eqref{eq:top-local-sum}.

Since $n\ge r\ge i+2$, we have $n-i-1>0$.
By \eqref{eq:global-facecount},
\begin{equation}\label{eq:global-ratio-bound}
\nu_i(M)\ge
\frac{i+1}{n-i-1}\nu_{i+1}(M).
\end{equation}
Substituting \eqref{eq:global-ratio-bound} into the last
term of \eqref{eq:reduced-intermediate} and simplifying,
we obtain
\begin{equation*}
\frac{\nu_i(M)^2}{\nu_{i-1}(M)}
\ge
\frac{(i+1)(n-i)}{i(n-i-1)}\nu_{i+1}(M).
\end{equation*}
Multiplying by $\nu_{i-1}(M)>0$, we obtain \eqref{eq:reduced-ulc}.

Suppose that equality holds in \eqref{eq:reduced-ulc}.
Then \eqref{eq:global-ratio-bound} holds with equality, so equality also holds in \eqref{eq:global-facecount}.
By Lemma~\ref{lem:facecounts}, every circuit of $M$ has at least $i+3$ elements.

Conversely, suppose that every circuit of $M$ has at least $i+3$ elements.
By the definition of $\rBC(M)$ and Theorem~\ref{thm:bc}, $\nu_j(M)=\binom{n-1}{j}$ for $j=i-1,i,i+1$.
Hence equality holds in \eqref{eq:reduced-ulc}.
\end{proof}

Let $M_1$ and $M_2$ be matroids on disjoint ground sets $E_1$ and $E_2$.
The \emph{direct sum} $M_1\oplus M_2$ is the matroid on $E_1\cup E_2$ with rank function
\begin{equation*}
\rk_{M_1\oplus M_2}(S)
=
\rk_{M_1}(S\cap E_1)+\rk_{M_2}(S\cap E_2)
\quad \text{for all }S\subseteq E_1\cup E_2.
\end{equation*}
The set of circuits of $M_1\oplus M_2$ is the union of the sets of circuits of $M_1$ and $M_2$.
We have $\mathcal{L}(M_1\oplus M_2)\cong \mathcal{L}(M_1)\times\mathcal{L}(M_2)$, where the Cartesian product is ordered by $(F_1,F_2)\le(G_1,G_2)$ if and only if $F_1\subseteq G_1$ and $F_2\subseteq G_2$.
By the product formula for M\"obius functions \cite[Proposition~3.8.2]{Sta12}, we have
\begin{equation}\label{eq:direct-sum-polynomial}
\chi_{M_1\oplus M_2}(t)
=
\chi_{M_1}(t)\chi_{M_2}(t)
\end{equation}
for loopless $M_1$ and $M_2$.

Let $U_{1,1}$ be the rank-one matroid on $\{e\}$.
Then $U_{1,1}$ has no circuits and $\chi_{U_{1,1}}(t)=t-1$.
We now prove Conjecture~\ref{conj:brylawski} and characterize equality.

\begin{proof}[Proof of Conjecture~\ref{conj:brylawski}]
Let $M$ be a loopless matroid of rank $r\ge1$ on a ground set $E$ with $|E|=n$.
Choose $e\notin E$ and let $U_{1,1}$ be the matroid on $\{e\}$ of rank $1$.
Set $M'=M\oplus U_{1,1}$.
Then $M'$ is loopless of rank $r+1$ on $n+1$ elements.

By \eqref{eq:direct-sum-polynomial},
\begin{equation*}
\chi_{M'}(t)
=
\chi_M(t)\chi_{U_{1,1}}(t)
=
(t-1)\chi_M(t).
\end{equation*}
Thus $\overline{\chi}_{M'}(t)=\chi_M(t)$.
Comparing coefficients, we obtain
\begin{equation}\label{eq:coloop-coefficients}
\nu_i(M')=w_i(M)
\quad\text{for all } 0\le i\le r.
\end{equation}

Fix $1\le i\le r-1$.
Applying Theorem~\ref{thm:reduced-ulc} to $M'$, we have
\begin{equation*}
\nu_i(M')^2
\ge
\frac{(i+1)(n-i+1)}{i(n-i)}
\nu_{i-1}(M')\nu_{i+1}(M').
\end{equation*}
Substituting \eqref{eq:coloop-coefficients}, we obtain
\begin{equation}\label{eq:full-ulc}
w_i(M)^2
\ge
\frac{(i+1)(n-i+1)}{i(n-i)}
w_{i-1}(M)w_{i+1}(M).
\end{equation}

Since $U_{1,1}$ has no circuits, the circuits of $M'$ are precisely the circuits of $M$.
By Theorem~\ref{thm:reduced-ulc},
equality holds in \eqref{eq:full-ulc} if and only if every circuit of $M$ has at least $i+3$ elements.
\end{proof}

\section*{Declaration of AI usage}
During the preparation of this manuscript, the authors used generative AI tools to assist in exploring possible approaches, checking technical details, and improving the exposition. The authors verified the mathematical arguments, computations, proofs, and results and take full responsibility for the content of this manuscript.


\begin{thebibliography}{99}

\bibitem{AHK18}
K.~Adiprasito, J.~Huh, and E.~Katz,
\emph{Hodge theory for combinatorial geometries},
Ann. of Math. (2) \textbf{188} (2018), no.~2, 381--452.

\bibitem{ADH23}
F.~Ardila, G.~Denham, and J.~Huh,
\emph{Lagrangian geometry of matroids},
J. Amer. Math. Soc. \textbf{36} (2023), no.~3, 727--794.

\bibitem{BHMPW22}
T.~Braden, J.~Huh, J.~P.~Matherne, N.~Proudfoot, and B.~Wang,
\emph{A semi-small decomposition of the Chow ring of a matroid},
Adv. Math. \textbf{409} (2022), Paper No.~108646, 49~pp.

\bibitem{BL26}
P.~Br\"and\'en and J.~Leake,
\emph{Lorentzian polynomials on cones},
Forum Math. Sigma \textbf{14} (2026), Paper No.~e16, 34~pp.

\bibitem{Bry77}
T.~Brylawski,
\emph{The broken-circuit complex},
Trans. Amer. Math. Soc. \textbf{234} (1977), no.~2, 417--433.

\bibitem{Bry82}
T.~Brylawski,
\emph{The Tutte polynomial. Part I: General theory},
in \emph{Matroid Theory and Its Applications}
(A.~Barlotti, ed.),
Liguori, Naples, 1982, 125--275.

\bibitem{dBE48}
N.~G. de~Bruijn and P.~Erd\H{o}s,
\emph{On a combinatorial problem},
Proc. Koninkl. Nederl. Akad. Wetensch.
\textbf{51} (1948), 1277--1279.

\bibitem{DLLW26}
A. Divoux, M. Larson, C. Lowen, and S. Wang,
\emph{Matroid flat counts can have many peaks},
arXiv:2608.07342, 2026.

\bibitem{Duk04}
M. Dukes,
\emph{On the number of matroids on a finite set},
S\'em. Lothar. Combin. {\bf 51} (2004/05), Art. B51g, 12 pp.

\bibitem{Duk08}
M. Dukes,
\emph{Concerning the shape of a geometric lattice},
Discrete Math. {\bf 308} (2008), no.~24, 6632--6638.

\bibitem{FY04}
E.~M.~Feichtner and S.~Yuzvinsky,
\emph{Chow rings of toric varieties defined by atomic lattices},
Invent. Math. {\bf 155} (2004), no.~3, 515--536.

\bibitem{Her72}
A.~P. Heron,
\emph{Matroid polynomials},
in \emph{Combinatorics}
(Proc. Conf. Combinatorial Math., Oxford, 1972),
Institute of Mathematics and its Applications,
Southend-on-Sea, 1972, 164--202.

\bibitem{HJ13}
R.~A. Horn and C.~R. Johnson,
\emph{Matrix Analysis},
2nd ed., Cambridge University Press, Cambridge, 2013.

\bibitem{Huh12}
J.~Huh,
\emph{Milnor numbers of projective hypersurfaces and the chromatic polynomial of graphs},
J. Amer. Math. Soc. {\bf 25} (2012), no.~3, 907--927.

\bibitem{HK12}
J.~Huh and E.~Katz,
\emph{Log-concavity of characteristic polynomials and the Bergman fan of matroids},
Math. Ann. {\bf 354} (2012), no.~3, 1103--1116.

\bibitem{Kun00}
J.~P.~S. Kung,
\emph{On the lines--planes inequality for matroids},
J. Combin. Theory Ser. A {\bf 91} (2000), no.~1-2, 363--368.

\bibitem{Mas72}
J.~H.~Mason,
\emph{Matroids: unimodal conjectures and Motzkin's theorem},
in \emph{Combinatorics}
(Proc. Conf. Combinatorial Math., Oxford, 1972),
Institute of Mathematics and its Applications,
Southend-on-Sea, 1972, 207--220.

\bibitem{MMIB12}
Y.~Matsumoto, S.~Moriyama, H.~Imai, and D.~Bremner,
\emph{Matroid enumeration for incidence geometry},
Discrete Comput. Geom. {\bf 47} (2012), no.~1, 17--43.

\bibitem{Oxl11}
J.~G. Oxley,
\emph{Matroid Theory},
2nd ed., Oxford Graduate Texts in Mathematics, vol.~21,
Oxford University Press, Oxford, 2011.

\bibitem{Pur86}
G.~B. Purdy,
\emph{Two results about points, lines and planes},
Discrete Math. {\bf 60} (1986), 215--218.

\bibitem{Rot70}
G.-C.~Rota,
\emph{Combinatorial theory, old and new},
in \emph{Actes du Congr\`es International des Math\'ematiciens}
(Nice, 1970), Tome~3,
Gauthier-Villars, Paris, 1971, 229--233.

\bibitem{Sey82}
P.~D. Seymour,
\emph{On the points-lines-planes conjecture}, 
J. Combin. Theory Ser. B {\bf 33} (1982), no.~1, 17--26.

\bibitem{Sta72}
R.~P.~Stanley,
\emph{Supersolvable lattices},
Algebra Universalis {\bf 2} (1972), 197--217.

\bibitem{Sta12}
R.~P.~Stanley,
\emph{Enumerative Combinatorics. Volume 1},
2nd ed., Cambridge Studies in Advanced Mathematics, vol.~49,
Cambridge University Press, Cambridge, 2012.

\bibitem{Sto75}
J.~R. Stonesifer,
\emph{Logarithmic concavity for edge lattices of graphs}, 
J. Combinatorial Theory Ser. A {\bf 18} (1975), 36--46.

\bibitem{Wel76}
D.~J.~A.~Welsh,
\emph{Matroid Theory},
London Mathematical Society Monographs, vol.~8,
Academic Press, London--New York, 1976.

\end{thebibliography}
\end{document}